\theoremstyle{plain}
\newtheorem{thm}{Theorem}[section]
\newtheorem{pro}[thm]{Proposition}
\newtheorem{lem}[thm]{Lemma}
\newtheorem{theoalph}{Theorem}
\newtheorem{proalph}[theoalph]{Proposition}
\newtheorem{factalph}[theoalph]{Fact}
\theoremstyle{definition}
\newtheorem{rem}[thm]{Remark}
\numberwithin{equation}{section}       
\begin{document}
\title[Degree growth of polynomial automorphisms and birational maps: new examples]{Degree 
growth of polynomial automorphisms \\
and birational maps: some examples}
\selectlanguage{english}

\author{Julie D\'eserti}
\address{Universit\'e Paris Diderot, Sorbonne Paris Cit\'e, Institut de 
Math\'ematiques de Jussieu-Paris Rive Gauche, UMR $7586$, CNRS, Sorbonne 
Universit\'es, UPMC Univ Paris $06$, F-$75013$ Paris, France.}
\email{deserti@math.univ-paris-diderot.fr}

\maketitle

\begin{abstract}
We provide the existence of new degree growths in the context of polynomial 
automorphisms of $\mathbb{C}^k$: if $k$ is an integer $\geq 3$, then for any 
$\ell\leq \left[\frac{k-1}{2}\right]$ there exist polynomial automorphisms $f$ 
of $\mathbb{C}^k$ such that $\deg f^n\sim n^\ell$. 
We also give counter-examples in dimension $k\geq 3$ to some 
classical properties satisfied by polynomial automorphisms of $\mathbb{C}^2$.

We provide the existence of new degree growths in the context of birational
maps of $\mathbb{P}^k_\mathbb{C}$: assume $k\geq 3$; forall $0\leq\ell\leq k$ 
there exist birational maps $\phi$ of $\mathbb{P}^k_\mathbb{C}$ such that 
$\deg \phi^n\sim n^\ell$.
\end{abstract}

\section{Introduction}\label{sec:intro}

Let $f$ be a polynomial automorphism of $\mathbb{C}^2$, then either 
$(\deg f^n)_{n\in\mathbb{N}}$ is bounded, or $(\deg f^n)_{n\in\mathbb{N}}$ grows 
exponentially. In higher dimensions there are intermediate growths:

\begin{theoalph}\label{thm:newdegree}
{\sl Let $k$ be an integer $\geq 3$. For any $\ell\leq \left[\frac{k-1}{2}\right]$ 
there exist polynomial automorphisms $f$ of $\mathbb{C}^k$ such that 
\[
\deg f^n\sim n^\ell.
\]
}
\end{theoalph}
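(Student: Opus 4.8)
The plan is to reduce everything to a single core construction in dimension $2\ell+1$ and then pad with extra variables. Indeed, if $F$ is a polynomial automorphism of $\mathbb{C}^{2\ell+1}$ with $\deg F^n\sim n^\ell$, then $F\times\mathrm{id}_{\mathbb{C}^{k-2\ell-1}}$ is a polynomial automorphism of $\mathbb{C}^k$ with the same degree sequence, since the added coordinates contribute only the degree-one maps $x\mapsto x$. As the hypothesis $\ell\leq\left[\frac{k-1}{2}\right]$ is exactly the condition $2\ell+1\leq k$, this reduction covers every admissible $\ell$ once the core cases are settled; the case $\ell=0$ is handled by any affine automorphism, for which $\deg F^n=1$ for all $n$. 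So the whole problem is to produce, for each $\ell\geq 1$, an automorphism of $\mathbb{C}^{2\ell+1}$ whose degree grows like $n^\ell$, and the dimension count $2\ell+1$ is forced by the mechanism below: each unit of polynomial growth consumes one pair of \emph{fiber} variables, sitting above one fixed \emph{base} variable.

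First I would isolate the building block responsible for one unit of growth. Fix a base coordinate $t$ and two fiber coordinates $(x,y)$, and consider the composition $F=b\circ a$ of the two elementary (hence automorphic) shears
\[
a(t,x,y)=(t,\,x+ty,\,y),\qquad b(t,x,y)=(t,\,x,\,y+tx),
\]
which gives $F(t,x,y)=\bigl(t,\;x+ty,\;tx+(1+t^2)y\bigr)$. The point is that $F$ fixes $t$ and acts on the fiber $(x,y)$ linearly, through the matrix $M=\left(\begin{smallmatrix}1&t\\ t&1+t^2\end{smallmatrix}\right)\in\mathrm{SL}_2(\mathbb{C}[t])$. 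Hence $F^n$ acts on the fiber by $M^n$, so that $\deg F^n$ equals the $t$-degree of the entries of $M^n$, up to the additive constant $1$ coming from the fiber variables. Because $\det M=1$ while $\mathrm{tr}\,M=2+t^2$ is nonconstant, the dominant eigenvalue of $M$ grows like $t^2$; concretely the entries of $M^n$ satisfy $u_{n+1}=(2+t^2)u_n-u_{n-1}$, whence $\deg_t M^n=2n+O(1)$. This yields $\deg F^n\sim 2n\sim n$, i.e. the case $\ell=1$, $k=3$. The lesson to retain is that making the fiber action \emph{linear} (with polynomial-in-base coefficients) turns the usual exponential Hénon-type growth into linear growth, the rate being the $t$-degree of the dominant eigenvalue.

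To reach $n^\ell$ I would nest $\ell$ such blocks. Introduce coordinates $(t,u_1,v_1,\dots,u_\ell,v_\ell)$ and build $F$ block by block so that block $j$ acts linearly on $(u_j,v_j)$ by a matrix $M_j$ of the same shape as $M$, but with entries polynomial in a suitable growing coordinate of block $j-1$ (block $1$ using the base $t$). The block upper-triangular shape makes invertibility of $F$ immediate, and $F$ is still a composition of elementary shears. Writing $D_j(n)$ for the maximal total degree of the block-$j$ coordinates of $F^n$, the $n$-th fiber iterate of block $j$ is a product of $n$ matrices whose entries have degree of order $D_{j-1}(k)$ at step $k$; hence $D_j(n)$ is controlled by $\sum_{k\leq n}D_{j-1}(k)$. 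Starting from $D_1(n)\sim n$, this recursion gives $D_j(n)\sim\sum_{k\leq n}k^{\,j-1}\sim n^j$, and in particular $D_\ell(n)\sim n^\ell$, so $\deg F^n\sim n^\ell$, as desired.

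The delicate point — and the main obstacle — is that these estimates are a priori only upper bounds: degree is subadditive under composition, so $\deg F^n\lesssim n^\ell$ comes essentially for free, whereas the matching lower bound requires that the leading terms produced at each level do not cancel as the blocks are composed (note that the matrices $M_j$ at different steps have \emph{different} eigendirections, so the degree of their product is not automatically the sum of the individual degrees). The heart of the proof is therefore a non-cancellation statement: I would track, inductively on the level $j$ and on $n$, an explicit dominant monomial and show that its coefficient in $F^n$ is nonzero and carried at the predicted degree of order $n^j$. Equivalently, one can pass to a well-chosen compactification of $\mathbb{C}^{2\ell+1}$ on which $F$ becomes algebraically stable, read $\deg F^n$ off from $(F^*)^n$ acting on the Picard group, and check that the resulting spectral data give growth of order exactly $n^\ell$. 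Making this non-cancellation rigorous through the nested composition is where the real work lies; the construction itself and the upper bound are comparatively routine.
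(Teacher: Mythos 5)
Your route is essentially the paper's. The paper also reduces to dimension $2\ell+1$ (its Proposition produces automorphisms of $\mathbb{C}^{2k+1}$ with $\deg f^n\sim n^k$, and Theorem A is deduced from it exactly by your padding remark), and its examples have precisely your nested skew-product shape: a base coordinate, plus one pair of fiber coordinates per unit of growth, each pair carrying a shear whose coefficient is drawn from the previous block. Concretely the paper takes $f=\big(z_1+z_0z_2^d,z_0,z_2\big)$ on $\mathbb{C}^3$, then $g=\big(z_1+z_0z_2^d,z_0,z_2,z_4+z_0^pz_3,z_3\big)$ on $\mathbb{C}^5$ with $p\geq d$, then $h=\big(\ldots,z_6+z_3^\ell z_5,z_5\big)$ on $\mathbb{C}^7$, and so on; your recursion $D_j(n)\lesssim\sum_{k\leq n}D_{j-1}(k)$ is exactly the mechanism behind its closed formulas $\deg f^n=dn+1$, $\deg g^n=\frac{pd}{2}n^2+\frac{p(2-d)}{2}n+1$, etc.

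The one place where you fall short of a complete proof is the point you yourself flag: you leave the lower bound (the ``non-cancellation'' statement) as a plan, calling it the heart of the matter and proposing either a dominant-monomial induction or a compactification with algebraic stability. As written this is a genuine gap, but it is far easier than you suggest, and you should close it directly rather than invoke spectral data. For your own blocks, every entry of $M=\left(\begin{smallmatrix}1&t\\ t&1+t^2\end{smallmatrix}\right)$, and of each $M_j$ built in the same way, is a polynomial with nonnegative coefficients; entries of products of such matrices are sums of products of such polynomials, hence again have nonnegative coefficients. For polynomials with nonnegative coefficients there is never any cancellation: degree is additive under products and the degree of a sum is the maximum of the degrees. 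In particular the $(2,2)$-entry of $\prod_{i<n}M_j\big(w\circ F^i\big)$ contains the term $\prod_{i<n}\big(1+(w\circ F^i)^2\big)$ with positive coefficient, which gives the matching lower bound $D_j(n)\gtrsim\sum_{i\leq n}D_{j-1}(i)$ at once. The paper sidesteps even this discussion by its choice of monomial shears: there each new coordinate is a sum of two terms of visibly different degrees (e.g. $f_{0,n}=f_{0,n-2}+f_{0,n-1}z_2^d$, the second strictly dominating), so the exact recursion $\delta_n=\delta_{n-1}+d$ follows by a one-line induction and no cancellation can ever occur. Either fix completes your argument; without one of them, the theorem is not yet proved. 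You should also pin down the unspecified ``suitable growing coordinate of block $j-1$'': it must be a coordinate whose degree under $F^n$ you have already bounded from below, playing the role of the paper's $z_0$ and $z_3$.
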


\bigskip

The group of polynomial automorphisms of $\mathbb{C}^2$ has a structure of 
amalgamated product (\cite{Jung}); using this rigidity a lot of properties of 
polynomial automorphisms of $\mathbb{C}^2$ have been established. All these 
properties show a dichotomy; up to conjugacy there are two types of polynomial 
automorphisms of $\mathbb{C}^2$: the elementary ones and the H\'enon ones. 
Furthermore if $f$ and $g$ are two polynomial automorphisms of $\mathbb{C}^2$ 
and $H_\infty$ denotes the line at infinity (we view $\mathbb{C}^2$ in 
$\mathbb{P}^2_\mathbb{C}$), then 

\smallskip
\begin{itemize}
\item[$(\mathcal{P}_1)$]: $f$ is algebraically stable if and only if 
$(\deg f^n)_{n\in\mathbb{N}}$ grows exponentially;
\smallskip
\item[$(\mathcal{P}_2)$]: for any $n\geq 1$ the equality $\deg f^n=(\deg f)^n$ 
holds if and only if $f$ does not preserve a fibration in hyperplanes;
\smallskip
\item[$(\mathcal{P}_3)$]: the sequences  $(\deg f^n)_{n\in\mathbb{N}}$ and 
$(\deg g^n)_{n\in\mathbb{N}}$ have the same growth if and only if the configurations 
of $\big(f(H_\infty),f^{-1}(H_\infty),\mathrm{Ind}(f),\mathrm{Ind}(f^{-1})\big)$ 
and $\big(g(H_\infty),g^{-1}(H_\infty),\mathrm{Ind}(g),\mathrm{Ind}(g^{-1})\big)$ 
are the same;
\smallskip
\item[$(\mathcal{P}_4)$]: $\deg f^2=(\deg f)^2$ if and only if $\deg f^n=(\deg f)^n$ 
for any $n\in\mathbb{N}$ (\emph{see} \cite[Proposition 3]{Furter}).
\end{itemize}

\smallskip

Note that in $(\mathcal{P}_1)$, $(\mathcal{P}_2)$ and $(\mathcal{P}_4)$ one can 
add "if and only if $f$ is of H\'enon type".

\begin{factalph}\label{fact:counterexample}
{\sl We give counter-examples to Properties $(\mathcal{P}_1)$, 
$(\mathcal{P}_2)$, $(\mathcal{P}_3)$ and $(\mathcal{P}_4)$ in 
dimension $\geq 3$.}
\end{factalph}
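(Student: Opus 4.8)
The plan is to refute each of the four equivalences by writing down one explicit polynomial automorphism of $\mathbb{C}^3$ and then lifting it to $\mathbb{C}^k$, $k>3$, via the product $f\times\mathrm{id}$ in the auxiliary variables, which changes neither the sequence $(\deg f^n)$ nor the geometry of $f$ at infinity. The common mechanism is the intermediate, sub-exponential degree growth furnished by Theorem \ref{thm:newdegree}: in dimension $2$ the degree sequence alone forces the rigid dichotomy behind $(\mathcal{P}_1)$–$(\mathcal{P}_4)$, whereas as soon as a growth $\deg f^n\sim n^\ell$ with $1\le\ell$ becomes available the four invariants entering these properties — the growth type, algebraic stability, the existence of an invariant fibration in hyperplanes, and the configuration at infinity — come apart. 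I would therefore assemble the counter-examples from three building blocks: the automorphisms of Theorem \ref{thm:newdegree}, the generalized H\'enon maps of $\mathbb{C}^2$, and their skew products with a translation.

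For $(\mathcal{P}_2)$ I would take $f(x,y,z)=\big(y,\,p(y)-\delta x,\,z+1\big)$ with $\deg p=d\ge 2$ and $\delta\in\mathbb{C}^*$. The first two coordinates evolve as a H\'enon map and the third as a translation, so $\deg f^n=d^n=(\deg f)^n$ for every $n$; yet $f$ sends the hyperplane $\{z=c\}$ onto $\{z=c+1\}$ and hence preserves a fibration in hyperplanes, contradicting $(\mathcal{P}_2)$. Multiplicativity of the degree also makes this $f$ algebraically stable, so it is of no use for $(\mathcal{P}_1)$; there I would instead take an automorphism whose homogenization in $\mathbb{P}^3_\mathbb{C}$ contracts a hyperplane onto a point of $\mathrm{Ind}(f)$, which forces $\deg f^2<(\deg f)^2$ and thus destroys algebraic stability, while the essential two-dimensional dynamics stays of H\'enon type so that $\lambda_1(f)=\lim_n(\deg f^n)^{1/n}>1$ and the growth remains exponential. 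Establishing the degree drop and the positivity of $\lambda_1$ at the same time is the delicate point, and I expect to control it by blowing up the offending point at infinity and reading $f^*$ on the Picard group of the resulting variety.

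For $(\mathcal{P}_4)$ the goal is a degree sequence that is multiplicative exactly up to order $2$: $\deg f^2=(\deg f)^2$ but $\deg f^{n_0}<(\deg f)^{n_0}$ for some $n_0\ge 3$. I would arrange this by postponing the contracting behaviour of the previous paragraph by one composition, so that no base point appears when $f$ acts on $f(H_\infty)$ but one appears when $f$ acts on $f^2(H_\infty)$, after which the growth falls into the polynomial regime of Theorem \ref{thm:newdegree}. For $(\mathcal{P}_3)$ I would exhibit two automorphisms $f$ and $g$ with $\big(f(H_\infty),f^{-1}(H_\infty),\mathrm{Ind}(f),\mathrm{Ind}(f^{-1})\big)=\big(g(H_\infty),g^{-1}(H_\infty),\mathrm{Ind}(g),\mathrm{Ind}(g^{-1})\big)$ but with $\deg f^n\sim n^{\ell}$ and $\deg g^n\sim n^{\ell'}$, $\ell\ne\ell'$; this is possible because in dimension $\ge 3$ the bare configuration of the two hypersurfaces and the two indeterminacy sets no longer records the higher-order contact that fixes the exponent $\ell$.

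In every case the claim reduces to computing $(\deg f^n)_n$ for an explicit $f$, and the real work — the main obstacle — is the degree bookkeeping: one must follow how $\mathrm{Ind}(f)$ and the images and preimages of $H_\infty$ interact under iteration, and detect exactly when a common factor appears in the homogeneous components of $f^n$. I would systematize this by passing to a smooth compactification on which $f$ becomes algebraically stable, computing the induced linear action on $H^{1,1}$, and recovering $\deg f^n$ from its matrix; the exponential-versus-polynomial alternative then reads off from whether the spectral radius of that action exceeds $1$, and the exponent $\ell$ from the size of the largest Jordan block attached to an eigenvalue of maximal modulus.
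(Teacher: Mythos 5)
Your example for $(\mathcal{P}_2)$ is correct and is essentially the paper's own counter-example: the paper takes $(z_0^2+z_1,z_0,z_2+1)$, i.e.\ your $f$ with $p(y)=y^2$, $\delta=-1$, and passing to $f\times\mathrm{id}$ to reach higher dimensions is harmless. But for the other three properties your text is a plan, not a proof, and the statement being proved is precisely ``we give counter-examples'': what must be produced is an explicit automorphism together with a verified degree sequence. For $(\mathcal{P}_1)$ you describe the right mechanism (contract the hyperplane at infinity into $\mathrm{Ind}(f)$, forcing $\deg f^2<(\deg f)^2$, while keeping exponential growth), but you never exhibit such a map, and you yourself flag that making the degree drop coexist with $\lambda(f)>1$ is ``the delicate point''; the paper settles it with the explicit $f=\big(z_2,(z_2^2+z_0)^2+z_2^2+z_0+z_1,z_2^2+z_0\big)$, for which $z_3=0$ is blown down onto $(0:1:0:0)\in\mathrm{Ind}(f)$ and yet $\deg f^n=2^{n+1}$. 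For $(\mathcal{P}_4)$ you say how one would like to postpone the degree drop past the second iterate but give no map; the paper gives $f=(z_1^2+z_5,z_5^2+z_4,z_2,z_1,z_0,z_4^2+z_3)$ on $\mathbb{C}^6$, with degrees $2,4,8$ and then $\deg f^4=8\neq 16$. For $(\mathcal{P}_3)$ you propose a pair with equal configurations and distinct polynomial exponents, but produce neither map nor any argument that the configurations can be made to coincide; note also that two distinct positive exponents would, via Theorem \ref{thm:newdegree}, push you into dimension at least $5$, whereas the paper stays in $\mathbb{C}^3$ with the pair $f=(z_0+z_1+z_2,z_0^2+z_0+z_1,z_0)$, $g=(z_1^2+z_0+z_1+z_2,z_1,z_0)$, whose configurations at infinity agree while the growths are exponential and bounded respectively.

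A further concern is your fallback verification method: compactifying so that the map becomes algebraically stable and reading $\deg f^n$ off the induced action on $H^{1,1}$ (spectral radius for the exponential/polynomial alternative, Jordan blocks for the exponent) is not a routine step in dimension $\geq 3$. There is no general recipe producing such a stable model for a given map, and finding one is typically harder than the degree bookkeeping you are trying to avoid; so deferring the delicate points to this machinery does not close the gap. The paper never needs it: all four counter-examples are chosen simple enough that $(\deg f^n)_n$ is computed directly by induction on the iterate, tracking when the components of the composition acquire a common factor. In short, you have identified the correct mechanisms, but for $(\mathcal{P}_1)$, $(\mathcal{P}_3)$ and $(\mathcal{P}_4)$ the actual content of the proof --- the explicit maps and their degree computations --- is missing.
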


Nevertheless one can prove a similar result to property $\mathcal{P}_4$:

\begin{proalph}
{\sl Let $f$ be a polynomial automorphism of $\mathbb{C}^k$. Then $\deg f^i=(\deg f)^i$ 
for $1\leq i\leq k$ if and only if $\deg f^n=(\deg f)^n$ for any $n\geq 1$.}
\end{proalph}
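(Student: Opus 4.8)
The plan is to reduce the whole statement to the behaviour of the top-degree homogeneous parts. Write $d=\deg f$ and $f=(f_1,\dots,f_k)$, and let $L=(P_1,\dots,P_k)$ be the tuple of degree-$d$ homogeneous parts of the $f_i$ (so $P_i=0$ if $\deg f_i<d$, and $P_i\neq 0$ for at least one $i$, hence $L\neq 0$). The first step is to check that for every $n$ one has $\deg f^n=d^n$ if and only if the iterated composition $L^{\circ n}$, a tuple of homogeneous polynomials of degree $d^n$, is not the zero tuple. This rests on the elementary fact that the top-degree homogeneous part of a composition is the composition of the top-degree parts whenever the latter does not vanish: expanding $f_i(f_1,\dots,f_k)$ and bookkeeping degrees shows that the degree-$d^n$ part of $f^n$ is exactly $L^{\circ n}$, so $f^n$ has degree $d^n$ precisely when $L^{\circ n}\neq 0$. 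Since the forward implication of the proposition is trivial, it remains to prove that if $L^{\circ i}\neq 0$ for $1\le i\le k$, then $L^{\circ n}\neq 0$ for all $n$.

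I would phrase the contrapositive as a nilpotency statement: if $L^{\circ N}=0$ for some $N$, then already $L^{\circ k}=0$. Note first that $\{n:L^{\circ n}=0\}$ is upward closed, since $L$ is homogeneous of positive degree and therefore fixes the origin, so $L^{\circ N}=0$ forces $L^{\circ m}=0$ for every $m\ge N$. In particular it is enough to bound the minimal such $N$ by $k$, and the hypothesis on all $1\le i\le k$ then enters only through this up-set property (it is equivalent to the single equality $L^{\circ k}\neq 0$).

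The key step is geometric. Consider the decreasing chain of Zariski closures of the images,
\[
W_n=\overline{L^{\circ n}(\mathbb{C}^k)},\qquad W_0=\mathbb{C}^k,\quad W_{n+1}=\overline{L(W_n)}.
\]
Each $W_n$ is an irreducible cone: it is the closure of the image of the irreducible variety $W_{n-1}$ under the morphism $L$, and since $L$ is homogeneous, images of cones are cones. Moreover $L^{\circ n}=0$ is equivalent to $W_n=\{0\}$. Let $N$ be minimal with $L^{\circ N}=0$; then for $1\le n\le N$ the inclusion $W_n\subseteq W_{n-1}$ is strict, for otherwise $W_{n+1}=\overline{L(W_n)}=\overline{L(W_{n-1})}=W_n$ and the chain would stabilise at the nonzero value $W_{n-1}$ and never reach $\{0\}$. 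As a proper closed irreducible subset of an irreducible variety has strictly smaller dimension, the integers $\dim W_0>\dim W_1>\dots>\dim W_N$ decrease strictly from $k$ to $0$, whence $N\le k$. This is exactly the required bound, and in fact it yields the sharper assertion that the single equality $\deg f^k=(\deg f)^k$ already forces $\deg f^n=(\deg f)^n$ for all $n$.

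I expect the only genuine subtlety to be the first step — verifying that no lower-order terms can conspire to raise the degree of an iterate, so that $\deg f^n=d^n$ is faithfully detected by $L^{\circ n}$ alone; this is a routine but slightly fussy degree computation. Once that is in place, the heart of the argument is the clean dimension count on the chain $(W_n)$, where irreducibility (rather than merely the chain being strictly decreasing) is what pins the length down to $k$.
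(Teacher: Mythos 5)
Your proof is correct, and its engine is the same as the paper's: a decreasing chain of irreducible varieties obtained by iterating the map, with the observation that each non-stabilizing step must strictly drop the dimension, so that degeneration, if it ever occurs, occurs within (ambient dimension) steps. What differs is the packaging. The paper works projectively: it sets $\Omega_n=f^n\big((z_{k-1}=0)\smallsetminus\mathrm{Ind}(f^n)\big)$ inside the hyperplane at infinity, tracks when $\Omega_\ell\subset\mathrm{Ind}(f)$ (``$f$ is not algebraically stable after $\ell$ steps''), and concludes $\ell\leq k-1$; it relies on the equivalence between algebraic stability and multiplicativity of degrees (stated in its Section 2.3 without proof) and on some bookkeeping with indeterminacy sets, e.g.\ $\Omega_2=f(\Omega_1)\subseteq\Omega_1$. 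You work affinely with the top-degree tuple $L$: since the restriction of the extension of $f$ to the hyperplane at infinity is exactly the projectivization of $L$, your cones $W_n=\overline{L^{\circ n}(\mathbb{C}^k)}$ are the affine cones over the closures of the paper's $\Omega_n$, the condition $\Omega_\ell\subset\mathrm{Ind}(f)$ translates into $L^{\circ(\ell+1)}=0$, and your bound $N\leq k$ matches the paper's $\ell\leq k-1$. Your formulation buys three things: it is self-contained, since you prove the degree criterion ($\deg f^n=d^n$ if and only if $L^{\circ n}\neq 0$) rather than quoting the algebraic-stability dictionary, and this composition-of-top-parts computation is indeed the only fussy point; it never uses injectivity of $f$, so it establishes the statement for arbitrary polynomial self-maps of $\mathbb{C}^k$, not just automorphisms; and it isolates the sharper criterion that the single equality $\deg f^k=(\deg f)^k$ already forces $\deg f^n=(\deg f)^n$ for all $n$, the exact analogue of Furter's two-dimensional property $(\mathcal{P}_4)$ --- this refinement is implicit in the paper's argument (via submultiplicativity of degrees) but not stated there. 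What the paper's projective version buys in exchange is the dichotomy phrased in the algebraic-stability language (``algebraically stable, or not algebraically stable after $\ell\leq k-1$ steps'') that the rest of its Section 4 is written in.
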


\bigskip

If $\phi$ is a birational self map of $\mathbb{P}^2_\mathbb{C}$, 
then $(\deg \phi^n)_{n\in\mathbb{N}}$ is either bounded, or grows linearly, or grows 
quadratically, or  grows exponentially (\cite{DillerFavre}). 
In this context there exist other types of growth in higher dimension. Lin has 
studied the degree growth of monomial maps of $\mathbb{P}^k_\mathbb{C}$ (\emph{see} 
\cite{Lin}); he proves in particular that if $A$ is a $k\times k$ integer matrix 
with nonzero determinant, then there exist two constants $\alpha\geq\beta\geq 0$
 and a unique integer $0\leq\ell\leq k-1$ such that for any~$n\in\mathbb{N}$
\[
\beta\,\rho(A)^\ell\, n^\ell\leq \deg\phi_A^n\leq \alpha\,\rho(A)^\ell\, n^\ell
\]
where $\rho(A)$ denotes the spectral radius of $A$ and $\phi_A$ the monomial map 
associated to $A$. Does there exist in dimension $k$ birational maps of 
$\mathbb{P}^k_\mathbb{C}$ with growth $n^\ell$ with $\ell>k-1$ ? 

\begin{theoalph}\label{thm:newdegreebir}
{\sl Assume $k\geq 3$; forall $0\leq\ell\leq k$ 
there exist birational maps $\phi$ of $\mathbb{P}^k_\mathbb{C}$ such that 
$\deg \phi^n\sim n^\ell$.}
\end{theoalph}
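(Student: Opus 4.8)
The plan is to construct, for each target exponent $\ell$ with $0\leq\ell\leq k$, an explicit birational map $\phi$ of $\mathbb{P}^k_\mathbb{C}$ whose iterate degrees satisfy $\deg\phi^n\sim n^\ell$. Since Lin's theorem already produces monomial maps realizing every exponent in the range $0\leq\ell\leq k-1$, the only genuinely new case is $\ell=k$, which lies strictly beyond what monomial maps can achieve; I would treat the remaining exponents by citing Lin (or by giving simple product/monomial witnesses) and concentrate the real work on $\ell=k$. So first I would record that for $0\leq\ell\leq k-1$ suitable $\phi$ exist via \cite{Lin}, leaving $\ell=k$ as the heart of the matter.

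\emph{The key steps for the case $\ell=k$.} First, I would seek a map built in triangular ("shear-like") form so that the degree growth can be propagated dimension by dimension. The idea is to iterate the mechanism that already produces polynomial growth: in each coordinate direction one introduces a dependence on the previous coordinates whose degree increases by one per iteration, so that after stacking $k$ such dependences the leading degree term accumulates as $n^k$. Concretely I would look for $\phi$ in affine coordinates of the form
\[
\phi(x_1,x_2,\dots,x_k)=\bigl(x_1+1,\;x_2+P_2(x_1),\;x_3+P_3(x_1,x_2),\;\dots,\;x_k+P_k(x_1,\dots,x_{k-1})\bigr),
\]
with each $P_j$ chosen so that the degree of the $j$-th coordinate of $\phi^n$ grows like $n^{j-1}$; the top coordinate then grows like $n^{k-1}$, and a final homogenization/twist is needed to push the projective degree up to $n^k$. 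Second, I would compute $\phi^n$ either by an explicit induction on $n$ (tracking the leading terms of each coordinate polynomial) or by passing to homogeneous coordinates on $\mathbb{P}^k_\mathbb{C}$ and analyzing the indeterminacy locus to control how much common factor is lost at each iteration, since $\deg\phi^n$ is the degree of $\phi^n$ written in reduced homogeneous form. Third, I would verify the lower bound $\deg\phi^n\gtrsim n^k$ by exhibiting a point or a line whose image has the predicted degree, ruling out cancellation, and the matching upper bound $\deg\phi^n\lesssim n^k$ by the triangular structure.

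\emph{The main obstacle.} The hard part will be controlling the \emph{reduction}: in homogeneous coordinates the naive degree of the $n$-th iterate is easy to bound from above, but the true degree $\deg\phi^n$ is what remains after dividing out the common factor of the coordinate forms, and this common factor can drop the degree unpredictably (this is exactly the failure of algebraic stability that the paper emphasizes in Fact~\ref{fact:counterexample}). To get the precise asymptotic $\deg\phi^n\sim n^k$ rather than a mere inequality, I would need to pin down the exact power of the exceptional factor removed at each step — most cleanly by finding a birational model or a resolution on which $\phi$ becomes algebraically stable, so that $\deg\phi^n$ can be read off from the action on an appropriate cohomology or Picard group and the growth rate is governed by a linear recurrence with the right Jordan block structure producing the polynomial exponent $k$. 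Once algebraic stability is arranged, the $n^k$ growth follows from the size of the largest Jordan block (of size $k+1$) attached to the eigenvalue $1$ of the induced linear action, and the asymptotic equivalence $\deg\phi^n\sim n^k$ is then immediate.
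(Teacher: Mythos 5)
Your reduction of the problem to the single case $\ell=k$ (citing \cite{Lin} for $0\leq\ell\leq k-1$) is exactly what the paper does, so that part is fine. But your construction for the crucial case $\ell=k$ contains a fatal flaw. For the triangular ansatz $\phi=(x_1+1,\,x_2+P_2(x_1),\,\dots,\,x_k+P_k(x_1,\dots,x_{k-1}))$, the degrees of the iterates do not grow at all: the first coordinate of $\phi^n$ is $x_1+n$, the second is $x_2+\sum_{i=0}^{n-1}P_2(x_1+i)$, which has degree exactly $\deg P_2$ for every $n$, and by induction every coordinate of $\phi^n$ has degree bounded independently of $n$. So no choice of the $P_j$ can make the $j$-th coordinate grow like $n^{j-1}$; additive triangular perturbations over a translation never accumulate degree. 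The mechanisms that genuinely produce growth are different, e.g.\ the paper's shift-register automorphism $f=(z_1+z_0z_2^d,z_0,z_2)$, where the high-degree coordinate is multiplied by $z_2^d$ before being fed back in, or multiplicative (monomial-type) coordinates. Your closing device, ``a final homogenization/twist to push the projective degree up to $n^k$,'' cannot repair this: homogenizing a map of degree $d$ gives a map of degree $d$, so it cannot raise the growth exponent by one. Note also that a polynomial-automorphism ansatz is the wrong starting class for $\ell=k$: the paper's Theorem~\ref{thm:newdegree} only reaches $\ell\leq\left[\frac{k-1}{2}\right]$ with automorphisms, and already in dimension $2$ quadratic growth is impossible for polynomial automorphisms while it is realized by birational maps.

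What the paper actually does for $\ell=k$ is start from the birational (non-polynomial) map of $\mathbb{P}^2_\mathbb{C}$ of \cite{DillerFavre}, $\varphi(z_0,z_1)=\bigl(z_1+\frac{2}{3},\,z_0\frac{z_1-\frac{1}{3}}{z_1+1}\bigr)$, which satisfies $\deg\varphi^n\sim n^2$, write $\varphi^n=\bigl(\frac{P_n}{Q_n},\frac{R_n}{S_n}\bigr)$ in reduced form, and then adjoin multiplicative coordinates one at a time: $\Psi_3=(\varphi,z_0z_2)$ on $\mathbb{P}^3_\mathbb{C}$ has $n$-th iterate whose last coordinate is $z_0z_2P_1\cdots P_{n-1}/(Q_1\cdots Q_{n-1})$, and since numerator and denominator share no common factor the degree behaves like $\sum_{i\leq n} i^2\sim n^3$; adjoining $z_2z_3$ gives $n^4$, and so on up to $n^k$. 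The coprimality check is precisely the ``reduction'' obstacle you correctly identified, but it is settled by explicit computation of the iterates, not by regularization. Your fallback plan (find a birational model on which $\phi$ is algebraically stable and exhibit a Jordan block of size $k+1$ for the eigenvalue $1$) is only a plan: you construct neither the model nor the block, and the existence of algebraically stable models for birational maps in dimension $\geq 3$ is itself a serious difficulty rather than a routine step. As it stands, the case $\ell=k$ — the only case not already covered by \cite{Lin} — remains unproved in your proposal.
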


Note that there exists a birational self map $f$ of $\mathbb{P}^3_\mathbb{C}$ such that $\deg f^n\sim n^4$ (\emph{see} \cite{Urech}).

\bigskip

\subsection*{Acknowledgement}   
I am very grateful to Dominique Cerveau for his constant support. 
\tableofcontents

\section{Recalls, definitions, notations}

\subsection{The group of polynomial automorphisms of $\mathbb{C}^k$}\label{subsec:defaut}

A \textbf{\textit{polynomial automorphism}} $f$ of $\mathbb{C}^k$ is a polynomial 
map of the type 
\[
f\colon\mathbb{C}^k\to\mathbb{C}^k,\quad\quad\big(z_0,z_1,\ldots,z_{k-1})\mapsto
(f_0(z_0,z_1,\ldots,z_{k-1}),f_1(z_0,z_1,\ldots,z_{k-1}),\ldots,f_{k-1}(z_0,z_1,\ldots,z_{k-1})
\big)
\]
that is bijective. The set of polynomial automorphisms of $\mathbb{C}^k$ form a 
group denoted $\mathrm{Aut}(\mathbb{C}^k)$. 

The automorphisms of $\mathbb{C}^k$ of the form $(f_0,f_1,\ldots,f_{k-1})$ where 
$f_i$ depends only on $z_i$, $z_{i+1}$, $\ldots$, $z_{k-1}$ are called 
\textbf{\textit{elementary automorphisms}} and form a subgroup $\mathrm{E}_k$ of 
$\mathrm{Aut}(\mathbb{C}^k)$. Moreover we have the inclusions
\[
\mathrm{GL}(\mathbb{C}^k)\subset\mathrm{Aff}_k\subset\mathrm{Aut}(\mathbb{C}^k)
\]
where $\mathrm{Aff}_k$ denotes the \textbf{\textit{group of affine maps}}
\[
f\colon(z_0,z_1,\ldots,z_{k-1})\mapsto\big(f_0(z_0,z_1,\ldots,z_{k-1}),f_1(z_0,z_1,
\ldots,z_{k-1}),\ldots,f_{k-1}(z_0,z_1,\ldots,z_{k-1})\big)
\]
with $f_i$ affine; $\mathrm{Aff}_k$ is the semi-direct product of 
$\mathrm{GL}(\mathbb{C}^k)$ with the commutative subgroups of translations. The 
subgroup $\mathrm{Tame}_k\subset\mathrm{Aut}(\mathbb{C}^k)$ generated by 
$\mathrm{E}_k$ and $\mathrm{Aff}_k$ is called the \textbf{\textit{group of tame 
automorphisms}}. If $k=2$ one has:

\begin{thm}[\cite{Jung}]
{\sl In dimension $2$ the group of tame automorphisms coincides with the whole 
group of polynomial automorphism; more precisely 
\[
\mathrm{Aut}(\mathbb{C}^2)=\mathrm{Aff}_2\ast_{\mathrm{Aff}_2\cap\mathrm{E}_2} \mathrm{E}_2.
\]}
\end{thm}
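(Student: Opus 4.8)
The statement combines two assertions: that every polynomial automorphism of $\mathbb{C}^2$ is tame (so that $\mathrm{Aut}(\mathbb{C}^2)=\mathrm{Tame}_2$), and that the only relations among the generators are those forced by the common subgroup $\mathrm{Aff}_2\cap\mathrm{E}_2$. The plan is to treat the two parts in turn: first generation, by induction on the degree $\deg f=\max(\deg f_0,\deg f_1)$, and then the amalgam structure, by showing that the resulting surjection from the abstract amalgamated product is injective.

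For the generation I would argue as follows. If $\deg f=1$ then $f\in\mathrm{Aff}_2$ and there is nothing to prove; so assume $\deg f\geq 2$ and seek $g\in\mathrm{E}_2\cup\mathrm{Aff}_2$ with $\deg(g\circ f)<\deg f$, after which the induction closes. Since $f$ is an automorphism its Jacobian determinant is a nonzero constant. Comparing the top-degree homogeneous parts $\bar f_0$, $\bar f_1$ of $f_0$, $f_1$, one finds that when $\deg f\geq 2$ the Jacobian of $(\bar f_0,\bar f_1)$ must vanish, so $\bar f_0$ and $\bar f_1$ are algebraically dependent forms in two variables; hence each is, up to a scalar, a power of a single homogeneous form.

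The crucial refinement — and the point where being a genuine automorphism, rather than merely having constant Jacobian, is indispensable (recall that the plane Jacobian problem is open) — is that, up to exchanging the two coordinates by the affine involution $(z_0,z_1)\mapsto(z_1,z_0)$, one has $d_1:=\deg f_1$ dividing $d_0:=\deg f_0$, together with a scalar $\lambda$ such that $\bar f_0=\lambda\,\bar f_1^{\,d_0/d_1}$. Granting this, composing with the elementary map $e\colon(z_0,z_1)\mapsto(z_0-\lambda z_1^{d_0/d_1},z_1)\in\mathrm{E}_2$ annihilates the leading part of the first coordinate of $e\circ f=(f_0-\lambda f_1^{d_0/d_1},f_1)$; when $d_0>d_1$ this already lowers $\deg f$, and when $d_0=d_1$ an affine shear first renders the two degrees distinct, after which the elementary reduction applies. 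I expect establishing this leading-form lemma — extracting from invertibility both the divisibility $d_1\mid d_0$ and the precise proportionality of the dominant parts — to be the main obstacle.

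Finally, for the amalgam structure, generation yields a surjective homomorphism $\Phi\colon\mathrm{Aff}_2\ast_{\mathrm{Aff}_2\cap\mathrm{E}_2}\mathrm{E}_2\twoheadrightarrow\mathrm{Aut}(\mathbb{C}^2)$, and it remains to prove that $\Phi$ is injective. By the normal-form theorem for amalgamated products it suffices to check that $\Phi$ sends every nonempty reduced alternating word to a map different from the identity. I would deduce this from a sharpened version of the leading-form analysis above: along a reduced word the dominant homogeneous parts never cancel, so the degrees of the successive $\mathrm{E}_2$-factors multiply and the image of any nonempty reduced word has degree strictly greater than $1$, whence it cannot be the identity. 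This multiplicativity of degrees along reduced words, once more powered by the leading-form lemma, completes the argument.
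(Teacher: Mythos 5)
The paper offers no proof of this statement: it is the classical Jung--van der Kulk theorem, quoted with the citation \cite{Jung} (generation of $\mathrm{Aut}(\mathbb{C}^2)$ by $\mathrm{Aff}_2$ and $\mathrm{E}_2$ is due to Jung; the amalgamated product structure is due to van der Kulk), so your attempt can only be measured against the standard proofs in the literature. Your outline follows exactly that classical route --- induction on $\deg f$ via elementary reductions for generation, then the normal-form criterion for amalgamated products for injectivity --- and the architecture is correct. But as a proof it has a genuine gap, which you concede yourself: the leading-form lemma is assumed, and it is the entire mathematical content of the hard direction. The Jacobian argument alone only gives that $J(\bar f_0,\bar f_1)=0$, hence that $\bar f_0=a\,h^{r}$ and $\bar f_1=b\,h^{s}$ for some homogeneous form $h$ and integers $r,s$; it does not exclude, say, $\bar f_0=a\,h^{2}$, $\bar f_1=b\,h^{3}$, where neither degree divides the other and no elementary composition kills the top term. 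Upgrading this to ``$d_1\mid d_0$ and $\bar f_0=\lambda\,\bar f_1^{\,d_0/d_1}$'' genuinely uses invertibility (as you note, constancy of the Jacobian cannot suffice, the Jacobian problem being open), and in the literature this step is carried out by substantive arguments --- Newton polygons, intersection behavior at infinity of the level curves of $f_0$ and $f_1$, or valuation-theoretic methods. Writing ``granting this'' grants the theorem.

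The second, smaller gap is in the injectivity part: the multiplicativity of degrees along reduced words is not a formal consequence of the single-map leading-form lemma, and ``the dominant parts never cancel'' is, as stated, a restatement of what must be proved. One needs a separate induction on word length: for a reduced word $a_n e_n a_{n-1}\cdots e_1 a_0$ with $e_i\in\mathrm{E}_2\smallsetminus\mathrm{Aff}_2$ (so $\deg e_i\geq 2$) and $a_i\in\mathrm{Aff}_2\smallsetminus\mathrm{E}_2$, one shows that the top homogeneous form of each partial composition is, up to scalar, a power of a single linear form, and that the hypothesis $a_i\notin\mathrm{Aff}_2\cap\mathrm{E}_2$ --- i.e.\ $a_i$ does not preserve the pencil $z_1=\mathrm{cst}$ --- is precisely what forces the next elementary letter to raise the degree by the factor $\deg e_{i+1}$ instead of cancelling the leading term. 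With that computation done, your conclusion is correct: every nonempty reduced word has degree $\geq 2$ or is a nonidentity element of one factor, so the surjection from the abstract amalgam is injective. In short: the skeleton you propose is the right one and matches the classical proofs, but both load-bearing lemmas are asserted rather than established, and the first of them is where essentially all the difficulty of Jung's theorem lives.
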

\noindent But this is not the case in higher dimension: 
$\mathrm{Tame}_3\subsetneq\mathrm{Aut}(\mathbb{C}^3)$ (\emph{see} 
\cite{ShestakivUmirbaev}).

Another important result in dimension $2$ is the following:

\begin{thm}[\cite{FriedlandMilnor}]
{\sl Let $f$ be an element of $\mathrm{Aut}(\mathbb{C}^2)$. Then, up to conjugacy,
\begin{itemize}
\item either $f$ belongs to $\mathrm{E}_2$,

\item or $f$ can be written as 
\[
\varphi_\ell\circ\varphi_{\ell-1}\circ\ldots\varphi_1
\]
where $\varphi_i\colon(z_0,z_1)\mapsto (z_1,P_i(z_1)-\delta_iz_0)$, 
$\delta_i\in\mathbb{C}^*$, $P_i\in\mathbb{C}[z_1]$, $\deg P_i\geq 2$.
\end{itemize}}
\end{thm}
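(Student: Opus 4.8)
The plan is to derive the classification directly from Jung's theorem, which exhibits $\mathrm{Aut}(\mathbb{C}^2)$ as the amalgamated product $\mathrm{Aff}_2\ast_C\mathrm{E}_2$ with $C=\mathrm{Aff}_2\cap\mathrm{E}_2$, combined with the elementary conjugacy theory of amalgamated free products. In any amalgam $A\ast_C B$ every element is conjugate to a cyclically reduced word, and a cyclically reduced word of length $\geq 2$ is never conjugate into a factor; hence $f$ falls into exactly one of three cases: $(i)$ $f$ is conjugate to an element of $\mathrm{Aff}_2$; $(ii)$ $f$ is conjugate to an element of $\mathrm{E}_2$; or $(iii)$ $f$ is conjugate to a cyclically reduced word $x_1x_2\cdots x_m$ with $m\geq 2$ whose letters alternate between $\mathrm{Aff}_2\setminus C$ and $\mathrm{E}_2\setminus C$. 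Cyclic reducedness forces the extreme letters to lie in distinct factors, so $m=2\ell$ is even. Cases $(i)$ and $(ii)$ will give the first alternative of the statement, and case $(iii)$ the H\'enon alternative.

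Cases $(i)$ and $(ii)$ are quickly dispatched. Case $(ii)$ is already the desired conclusion. For case $(i)$, write an affine map as $z\mapsto Mz+b$ with $M\in\mathrm{GL}(\mathbb{C}^2)$; since $\mathbb{C}$ is algebraically closed, $M$ is conjugate by a linear map to an upper-triangular matrix, and this conjugacy turns $z\mapsto Mz+b$ into an affine map whose second coordinate depends only on $z_1$, that is, into an element of $C\subset\mathrm{E}_2$. Thus every affine map is conjugate into $\mathrm{E}_2$, and both cases land in the first alternative.

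The heart of the matter is case $(iii)$. Here I would first use the Bruhat decomposition $C\backslash\mathrm{Aff}_2/C=\{C,\,C\sigma C\}$, where $\sigma\colon(z_0,z_1)\mapsto(z_1,z_0)$, to write each affine letter as $c\,\sigma\,c'$ with $c,c'\in C$. Substituting, absorbing every $C$-factor into the neighbouring elementary letter (using $C\subset\mathrm{E}_2$), and conjugating cyclically, I would bring $f$ to the form $(\sigma E_1)(\sigma E_2)\cdots(\sigma E_\ell)$ with each $E_i\colon(z_0,z_1)\mapsto(\gamma_iz_0+g_i(z_1),\,\alpha_iz_1+\beta_i)$ lying in $\mathrm{E}_2\setminus C$; the hypothesis that the original elementary letters lie outside $C$ guarantees $\deg g_i\geq 2$. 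Finally I would normalize by a telescoping conjugation: inserting $A_iA_i^{-1}=\mathrm{id}$ (with $A_i\in C$) between consecutive factors, each block $A_{i-1}^{-1}(\sigma E_i)A_i$ becomes a map $(z_0,z_1)\mapsto(z_1,P_i(z_1)-\delta_iz_0)$ once the scaling and translation parameters of $A_{i-1}$ are chosen to annihilate the affine part of the first coordinate; one finds that $\delta_i$ is a nonzero multiple of $\gamma_i$, hence $\delta_i\in\mathbb{C}^*$, while $\deg P_i=\deg g_i\geq 2$. This is exactly $\varphi_i$, and the telescoped product is a conjugate of $f$.

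I expect the reduction in case $(iii)$ to be the main obstacle, and within it the point that most deserves care is whether the affine adjustments $A_i$ can be chosen coherently all the way around the cyclic word. This is where one might fear an obstruction, but it evaporates: the constraints produced by demanding $\varphi_i(z_0,z_1)=(z_1,\ast)$ relate the scaling coordinate of $A_{i-1}$ to that of $A_i$, and the translation coordinate of $A_{i-1}$ to that of $A_i$, and these two families of parameters decouple, so the whole system can be solved freely with no cyclic compatibility condition. The surrounding bookkeeping---the triangularization of case $(i)$, the verification that absorbing $C$-factors does not drop any degree below $2$ (so that no letter collapses and the word stays cyclically reduced), and the appeal to the normal-form theorem for amalgams---is routine once this decoupling is observed.
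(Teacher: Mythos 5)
The paper does not prove this statement; it is quoted directly from the cited reference \cite{FriedlandMilnor}, so there is no internal proof to compare against. Your argument is correct and is essentially the standard Friedland--Milnor proof: Jung's amalgam $\mathrm{Aff}_2\ast_{C}\mathrm{E}_2$, the conjugacy theorem for amalgamated products (conjugate into a factor or to a cyclically reduced alternating word of even length), triangularization to handle the affine case, the Bruhat decomposition $\mathrm{Aff}_2=C\sqcup C\sigma C$ to rewrite the alternating word as $\prod_i\sigma E_i$, and an affine renormalization to reach the H\'enon form. In particular you correctly flagged, and correctly resolved, the only delicate point: the telescoping conjugation closes up around the cycle because the H\'enon condition on the block $A_{i-1}^{-1}(\sigma E_i)A_i$ only constrains the first-coordinate parameters of $A_{i-1}$ in terms of the second-coordinate parameters of $A_i$ (one checks $a_{i-1}=\alpha_i d_i$, $c_{i-1}=\alpha_i e_i+\beta_i$, $b_{i-1}=0$, with $d_i,e_i$ always free), so the system is solvable with $A_\ell=A_0$ and no monodromy obstruction arises, while $\delta_i=-\gamma_i a_i/d_{i-1}\in\mathbb{C}^*$ and $\deg P_i=\deg g_i\geq 2$ as required.
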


We denote by $\mathcal{H}$ the set of polynomial automorphisms of $\mathbb{C}^2$ 
can be written up to conjugacy as 
\[
\varphi_\ell\circ\varphi_{\ell-1}\circ\ldots\varphi_1
\]
where $\varphi_i\colon(z_0,z_1)\mapsto (z_1,P_i(z_1)-\delta_iz_0)$, 
$\delta_i\in\mathbb{C}^*$, $P_i\in\mathbb{C}[z_1]$, $\deg P_i\geq 2$.
The elements of $\mathcal{H}$ are \textbf{\textit{of H\'enon type}}.

From now one we will denote $f=(f_0,f_1,\ldots,f_{k-1})$ instead of 
\[
f\colon(z_0,z_1,\ldots,z_{k-1})\mapsto\big(f_0(z_0,z_1,\ldots,z_{k-1}),f_1(z_0,z_1,
\ldots,z_{k-1}),\ldots,f_{k-1}(z_0,z_1,\ldots,z_{k-1})\big).
\]

The \textbf{\textit{algebraic degree}} $\deg f$ of 
$f=(f_0,f_1,\ldots,f_{k-1})\in\mathrm{Aut}(\mathbb{C}^k)$ is 
$\max(\deg f_0,\deg f_1,\ldots,\deg f_{k-1})$. 

\subsection{The Cremona group}\label{subsec:cremona}

A \textbf{\textit{rational self map}} $f$ of $\mathbb{P}^k_\mathbb{C}$ can be 
written
\[
\big(z_0:z_1:\ldots:z_k\big)\dashrightarrow\big(f_0(z_0,z_1,\ldots,z_k):
f_1(z_0,z_1,\ldots,z_k):\ldots:f_k(z_0,z_1,\ldots,z_k)\big)
\]
where the $f_i$'s are homogeneous polynomials of the same degree $\geq 1$ and 
without common factor of positive degree. The \textbf{\textit{degree}} of $f$ 
is the degree of the $f_i$. If there exists a rational self map 
$g$ of $\mathbb{P}^k_\mathbb{C}$ such that $fg=gf=\mathrm{id}$ we 
say that the rational self map $f$ of $\mathbb{P}^k_\mathbb{C}$ is 
\textbf{\textit{birational}}. The set of birational self maps of 
$\mathbb{P}^k_\mathbb{C}$ form a group denoted $\mathrm{Bir}(\mathbb{P}^k_\mathbb{C})$ 
and called the \textbf{\textit{Cremona group}}. Of course 
$\mathrm{Aut}(\mathbb{C}^k)$ is a subgroup of $\mathrm{Bir}(\mathbb{P}^k_\mathbb{C})$. 
An other natural subgroup of $\mathrm{Bir}(\mathbb{P}^k_\mathbb{C})$ is the group 
$\mathrm{Aut}(\mathbb{P}^k_\mathbb{C})\simeq\mathrm{PGL}(k+1;\mathbb{C})$ of 
automorphisms of $\mathbb{P}^k_\mathbb{C}$.

The \textbf{\textit{indeterminacy set}} $\mathrm{Ind}(f)$ of $f$ is the set 
of the common zeros of the $f_i$'s. The \textbf{\textit{exceptional set}} 
$\mathrm{Exc}(f)$ of~$f$ is the (finite) union of subvarieties $M_i$ of 
$\mathbb{P}^k_\mathbb{C}$ such that $f$ is not injective on any open subset of 
$M_i$.

\subsection{A little bit of dynamics}\label{subsection:defdyn}

Let $f$ be a polynomial automorphism of $\mathbb{C}^k$. One can see $f$ as a 
birational self map of $\mathbb{P}^k_\mathbb{C}$ still denoted $f$. We will say 
that $f$ is \textbf{\textit{algebraically stable}} if for any $n>0$
\[
f^n\big(\{z_k=0\}\smallsetminus\mathrm{Ind}(f^n)\big)
\]
is not contained in $\mathrm{Ind}(f)$.  This is equivalent to the fact that 
$(\deg f)^n=\deg f^n$ for any $n>0$. For instance elements of $\mathcal{H}$ 
are algebraically stable.

\begin{rem}
Note that in dimension $2$ one usually says that $f$ is algebraically stable 
if for any $n>0$
\[
f^n\big(\{z_2=0\}\smallsetminus\mathrm{Ind}(f^n)\big)\cap\mathrm{Ind}(f)=\emptyset.
\]
Be careful this is not equivalent in higher dimension: consider for instance 
\[
f=\big(5z_0^2+z_2^2+6z_0z_2+z_1,z_2^2+z_0,z_2\big);
\]
then
\begin{itemize}
\item on the one hand $(-1:0:1:0)$ belongs to 
$\{z_3=0\}\smallsetminus\mathrm{Ind}(f)$ and 
$f(-1:0:1:0)=(0:1:0:0)\in\mathrm{Ind}(f)=\{(0:1:0:0)\}$.
\item on the other hand for any $n\geq 1$ $\deg f^n=(\deg f)^n$.  
\end{itemize}
\end{rem}

The algebraic degree of a birational map $f$ of $\mathbb{P}^n_\mathbb{C}$ (resp. 
a polynomial automorphism of $\mathbb{C}^k$) is not a dynamical invariant so we 
introduce the \textbf{\textit{dynamical degree}}
\[
\lambda(f)=\lim_{n\to +\infty}(\deg f^n)^{1/n}
\]
which is a dynamical invariant. In other words if 
$f\in\mathrm{Bir}(\mathbb{P}^k_\mathbb{C})$ (resp. $\mathrm{Aut}(\mathbb{C}^k)$), 
then for any $g\in\mathrm{Bir}(\mathbb{P}^k_\mathbb{C})$ (resp. 
$g\in\mathrm{Aut}(\mathbb{C}^k)$) one has $\lambda(f)=\lambda(gfg^{-1})$. For 
any element $f$ in $\mathcal{H}$ the algebraic and dynamical degrees coincide; 
more precisely if 
\[
f=\varphi_\ell\circ\varphi_{\ell-1}\circ\ldots\circ\varphi_1
\]
where $\varphi_i=(z_1,P_i(z_1)-\delta_iz_0)$, $\delta_i\in\mathbb{C}^*$, 
$P_i\in\mathbb{C}[z_1]$, $\deg P_i\geq 2$ one has (\cite{FriedlandMilnor})
\[
\lambda(f)=\prod_{i=1}^\ell \deg \varphi_i\geq 2.
\]
A polynomial automorphism $f$ is in $\mathrm{E}_k$ if and only if for any 
$n\geq 1$ the equality $\deg f=\deg f^n$ holds hence $\lambda(f)=1$. In other 
words a polynomial automorphism $f$ of $\mathbb{C}^2$ belongs to $\mathcal{H}$
 if and only if $\lambda(f)>1$. There is an other characterization of the 
automorphisms of H\'enon type: 

\begin{thm}[\cite{Lamy}]\label{thm:steph}
{\sl The centralizer of $f$ in $\mathrm{Aut}(\mathbb{C}^2)$, that is 
$\big\{g\in\mathrm{Aut}(\mathbb{C}^2)\,\vert\, fg=gf\big\}$, is countable if 
and only if $f$ belongs to $\mathcal{H}$.}
\end{thm}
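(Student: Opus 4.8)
The plan is to read the Friedland--Milnor dichotomy (up to conjugacy $f$ is either elementary, i.e. in $\mathrm{E}_2$, or of H\'enon type) through the action of $\mathrm{Aut}(\mathbb{C}^2)$ on the Bass--Serre tree $T$ attached to the amalgamated product $\mathrm{Aff}_2\ast_{\mathrm{Aff}_2\cap\mathrm{E}_2}\mathrm{E}_2$ of Jung's theorem. The vertices of $T$ are the cosets of $\mathrm{Aff}_2$ and of $\mathrm{E}_2$, the edge stabilizers are conjugate to $B=\mathrm{Aff}_2\cap\mathrm{E}_2$, and the two classes of the dichotomy are exactly the two dynamical types of an isometry of a tree: an elementary automorphism is \emph{elliptic} (it fixes a vertex, being conjugate into a factor), whereas an automorphism of $\mathcal{H}$ is not conjugate into either factor and so acts as a \emph{hyperbolic} isometry, with a unique axis $\gamma$, a bi-infinite geodesic on which it acts by a nontrivial translation. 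Since countability of the centralizer is a conjugacy invariant, it suffices to treat these two cases.

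Assume first $f\in\mathcal{H}$, and write $C(f)$ for its centralizer. Any $g\in C(f)$ sends the axis of $f$ to the axis of $gfg^{-1}=f$; by uniqueness of the axis, $g$ preserves $\gamma$. Restriction to $\gamma\cong\mathbb{R}$ gives a homomorphism $C(f)\to\mathrm{Isom}(\gamma)$, and since $f$ restricts to a nontrivial translation, every $g\in C(f)$ restricts to an isometry of $\mathbb{R}$ commuting with that translation; such an isometry is again a translation (a reflection never commutes with a nonzero translation). Hence the image of $C(f)$ lies in the simplicial translations preserving the vertex set of $\gamma$, a subgroup of an infinite cyclic group, so it is cyclic. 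Thus $C(f)$ has countable kernel $K=C(f)\cap\mathrm{Fix}(\gamma)$ and cyclic quotient, where $\mathrm{Fix}(\gamma)$ denotes the pointwise stabilizer of $\gamma$, and it remains to prove that $K$ is countable.

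This last point is the crux. The pointwise stabilizer of $\gamma$ is the nested intersection $\bigcap_{e\subset\gamma}G_e$ of the edge groups along $\gamma$, each a conjugate of $B$. I would analyze, for two consecutive edges sharing a vertex, an intersection of the shape $B\cap\theta B\theta^{-1}$ with $\theta\in\mathrm{E}_2\smallsetminus B$ (and the symmetric $\mathrm{Aff}_2$-sided version): writing elements of $B$ in their explicit triangular form $(az_0+bz_1+c,\,dz_1+e)$ and demanding that a genuinely nonlinear elementary conjugation keep the map affine forces severe constraints on the linear part. The aim is to show that fixing a segment of $\gamma$ of length two already cuts $B$ down drastically, and that along the whole bi-infinite geodesic the intersection collapses to a countable (indeed finite) group; the commutation with $f$, which permutes the edges of $\gamma$, can then propagate the constraint from a fundamental segment to all of $\gamma$. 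I expect this explicit control of the descending intersection of edge groups to be the main obstacle, since $B$ is itself uncountable and only the interaction of several of its conjugates along $\gamma$ produces the collapse. Granting it, $C(f)$ is an extension with countable kernel and cyclic quotient, hence countable.

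For the converse I argue by contraposition: if $f\notin\mathcal{H}$ then, after conjugation, $f\in\mathrm{E}_2$, say $f=(\alpha z_0+P(z_1),\,\beta z_1+\delta)$ with $\alpha,\beta\in\mathbb{C}^*$ and $P\in\mathbb{C}[z_1]$, and I would exhibit an uncountable subgroup of its centralizer. When $\alpha=1$ the family $g_s=(z_0+s,z_1)$, $s\in\mathbb{C}$, commutes with $f$. Otherwise one first removes $P$ by a conjugation $(z_0+Q(z_1),z_1)$, solving the linear equation $Q\circ(\beta z_1+\delta)-\alpha Q=-P$ for a polynomial $Q$; this is possible away from the resonances $\alpha=\beta^m$, after which $f$ is linear and commutes with a one-parameter torus. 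In the resonant case $f$ takes a normal form such as $(\beta^m z_0+cz_1^m,\,\beta z_1)$, which embeds in a one-parameter algebraic subgroup $g_s=(\beta^{ms}z_0+s\,c\,\beta^{m(s-1)}z_1^m,\,\beta^s z_1)$ with $g_1=f$ and $g_sg_t=g_{s+t}$, again an uncountable family commuting with $f$. In every case $C(f)$ is uncountable, which completes the equivalence.
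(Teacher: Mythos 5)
The paper itself contains no proof of this statement: it is imported wholesale from \cite{Lamy}, and your outline --- the action of $\mathrm{Aut}(\mathbb{C}^2)$ on the Bass--Serre tree of Jung's amalgam, hyperbolicity of H\'enon elements, invariance of the axis under the centralizer, cyclic image of the restriction to the axis --- is in fact precisely the strategy of Lamy's paper. Your converse direction (if $f\notin\mathcal{H}$ then, up to conjugacy, $f\in\mathrm{E}_2$ and has uncountable centralizer) is essentially complete and correct; the only case you gloss over is $\beta$ a root of unity, where several resonant monomials can survive and the normal form is $f=(\alpha z_0+P(z_1),\beta z_1)$ with $P(\beta z_1)=\alpha P(z_1)$ rather than a single monomial, but there the uncountable family $g_\lambda=(z_0+\lambda P(z_1),z_1)$, $\lambda\in\mathbb{C}$, commutes with $f$, so that case is a one-line patch.

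The forward direction, however, has a genuine gap, and you flag it yourself: everything reduces to the countability of $K=C(f)\cap\mathrm{Fix}(\gamma)$, and for that step you only describe what you \emph{would} do and then conclude ``granting it''. This is not a deferrable verification; it is the entire content of the theorem. Every ingredient you do establish (uniqueness of the axis, translations commuting with translations, extension with kernel $K$ and cyclic quotient) is soft Bass--Serre theory valid in any amalgamated product, and in that generality the conclusion is simply false: in $G=(\mathbb{Z}\times H)\ast_H(\mathbb{Z}\times H)$ with $H$ an uncountable group amalgamated along the central copies of $H$, the subgroup $H$ is central in $G$, yet the product $f=ab$ of the generators of the two $\mathbb{Z}$-factors is a cyclically reduced word of length two, hence a hyperbolic isometry of the tree; its centralizer contains $H$ and is uncountable. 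So no argument at the level of generality where you stop can close the gap: one must exploit the specific algebra of $B=\mathrm{Aff}_2\cap\mathrm{E}_2$ inside $\mathrm{Aut}(\mathbb{C}^2)$, namely prove (as Lamy does, and as your sketched computation with triangular forms is meant to begin) that for a cyclically reduced composition of H\'enon maps the nested intersection $\bigcap_{n\in\mathbb{Z}}f^nB_0f^{-n}$ of edge stabilizers along the axis collapses to a finite group --- for the standard H\'enon map $(z_1,z_1^2-\delta z_0)$ this intersection is the order-three group generated by $(\omega z_0,\omega^2z_1)$, $\omega^3=1$, and verifying the analogous collapse in general is exactly where the real work lies. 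Until that lemma is proved, the implication ``$f\in\mathcal{H}\Rightarrow C(f)$ countable'' is not established.
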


\section{Automorphisms with polynomial growths}\label{sec:lineargrowths}

\subsection{The growths of a polynomial automorphisms and its inverse}

If $f$ is a polynomial automorphism of~$\mathbb{C}^k$, then 
$(\deg f,\deg f^{-1})$ is the bidegree of $f$. There is a relationship
between $\deg f$ and $\deg f^{-1}$ (\emph{see}~\cite{Bass}):
\begin{equation}\label{eq:reldeg}
\left\{
\begin{array}{ll}
\deg f^{-1}\leq(\deg f)^{k-1}\\
\deg f\leq(\deg f^{-1})^{k-1}
\end{array}
\right.
\end{equation}

As a result if $f$ is a polynomial automorphism of $\mathbb{C}^k$, the degree 
growths of $f$ and $f^{-1}$ are linked:
 
\begin{pro}
{\sl Let $f$ be a polynomial automorphism of $\mathbb{C}^k$.

\begin{itemize}
\item The sequence $(\deg f^n)_{n\in\mathbb{N}}$ is bounded if and only if the sequence 
$(\deg f^{-n})_{n\in\mathbb{N}}$ is bounded.
\smallskip
\item The sequence $(\deg f^n)_{n\in\mathbb{N}}$ grows exponentially if and only if 
$(\deg f^{-n})_{n\in\mathbb{N}}$ grows exponentially.
\smallskip
\item If $\deg f^n\simeq n^p$ and $\deg f^{-n}\simeq n^q$ for some integers $p,$ 
$q\geq 1$, then
\[
(p,q)\in\left\{\left(\Big[\frac{q+1}{k}\Big],q\right),\ldots,(kq,q)\right\}.
\]
\end{itemize}}
\end{pro}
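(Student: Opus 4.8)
The plan is to apply the Bass inequalities \eqref{eq:reldeg} not to $f$ itself but to each iterate $f^n$, whose inverse is $f^{-n}$. Since $f^n\in\mathrm{Aut}(\mathbb{C}^k)$, \eqref{eq:reldeg} gives for every $n\geq 1$
\[
\deg f^{-n}\leq(\deg f^n)^{k-1},\qquad \deg f^n\leq(\deg f^{-n})^{k-1}.
\]
All three assertions are extracted from this single pair of inequalities; the only work is to read off the constraint that each type of growth imposes on the other.

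For the first item, if $\deg f^n\leq M$ for all $n$, the left inequality yields $\deg f^{-n}\leq M^{k-1}$, so $(\deg f^{-n})_n$ is bounded, and the reverse implication is symmetric. For the second item I would phrase exponential growth as $\lambda(f)>1$, which is legitimate since $\lambda(f)=\lim_n(\deg f^n)^{1/n}$; taking $n$-th roots in the two inequalities and letting $n\to+\infty$ gives $\lambda(f^{-1})\leq\lambda(f)^{k-1}$ and $\lambda(f)\leq\lambda(f^{-1})^{k-1}$. Hence $\lambda(f)>1$ forces $\lambda(f^{-1})^{k-1}\geq\lambda(f)>1$, i.e. $\lambda(f^{-1})>1$, and conversely. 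Phrasing this through $\lambda$ rather than through a trichotomy is important: in dimension $\geq 3$ intermediate growths exist, so I cannot argue ``not exponential $\Rightarrow$ polynomial or bounded''.

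For the third item, insert the hypotheses $\deg f^n\simeq n^p$ and $\deg f^{-n}\simeq n^q$ into the two inequalities. Comparing the exponents of $n$ (the multiplicative constants hidden in $\simeq$ being irrelevant to the power) gives $q\leq(k-1)p$ and $p\leq(k-1)q$. From $p\leq(k-1)q=kq-q\leq kq$ I obtain the upper bound $p\leq kq$. From $q\leq(k-1)p<kp$ I get $q+1\leq kp$, hence $\tfrac{q+1}{k}\leq p$, and since $p$ is an integer this yields $p\geq\big[\tfrac{q+1}{k}\big]$. Together these place $(p,q)$ in the announced set. There is no deep obstacle here, as everything reduces to \eqref{eq:reldeg} applied to $f^n$; the one point requiring care is precisely this last conversion of the two real inequalities into the stated integer-part bounds, together with checking that the degenerate case $\big[\tfrac{q+1}{k}\big]=0$ causes no clash with the standing hypothesis $p\geq 1$.
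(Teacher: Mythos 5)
Your proof is correct and follows exactly the route the paper intends: the proposition is stated there as an immediate consequence of the Bass inequalities \eqref{eq:reldeg}, applied to each iterate $f^n$ with inverse $f^{-n}$, which is precisely your argument. Your write-up in fact supplies the details the paper leaves implicit (taking $n$-th roots for the exponential case, and converting $q\leq(k-1)p$, $p\leq(k-1)q$ into the stated integer bounds), so there is nothing to correct.
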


\begin{rem}
When we write "The sequence $(\deg f^n)_{n\in\mathbb{N}}$ grows exponentially if 
and only if $(\deg f^{-n})_{n\in\mathbb{N}}$ grows exponentially" it does not mean 
that $(\deg f^n)_{n\in\mathbb{N}}$ and $(\deg f^{-n})_{n\in\mathbb{N}}$ have exactly 
the same behavior: the polynomial automorphism of $\mathbb{C}^3$ given by 
$f=(z_0^2+z_1+z_2,z_0^2+z_1,z_0)$ satisfies forall $n\geq 1$
\[
\left\{
\begin{array}{lll}
\deg f^n=2^n\\
\deg f^{-n}=2^{[\frac{n+1}{2}]}
\end{array}
\right.
\]
\end{rem}

\subsection{Examples of polynomial automorphisms with new polynomial growths}

Let us now give examples of polynomial automorphisms with polynomial growths.

\begin{lem}\label{tec1}
{\sl Let us consider the polynomial automorphism of $\mathbb{C}^3$ given by
\[
f=\big(z_1+z_0z_2^d,z_0,z_2\big)
\]
where $d\geq 1$.
One has $\deg f^n=dn+1$ and $\deg f^n=\deg f^{-n}$ for any $n\geq 1$.}
\end{lem}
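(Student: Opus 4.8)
The plan is to exploit the fact that the last coordinate is preserved: since $f=(z_1+z_0z_2^d,z_0,z_2)$ fixes $z_2$, I would freeze $t:=z_2$ and regard $f$ as a $t$-parametrized \emph{linear} map in the remaining variables $(z_0,z_1)$. Explicitly $(z_0,z_1)\mapsto(t^dz_0+z_1,z_0)$, which is left multiplication by
\[
M=\begin{pmatrix} t^d & 1\\ 1 & 0\end{pmatrix}.
\]
Because $t$ is unchanged at each step, composing $f$ with itself corresponds exactly to taking powers of $M$, so the two non-trivial coordinates of $f^n$ are the linear forms in $z_0,z_1$ whose coefficients are the entries of $M^n$, these entries being polynomials in $z_2$. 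The whole problem thus reduces to tracking the $z_2$-degrees of the entries of $M^n$.

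First I would compute these degrees. Since $\det M=-1$ and the trace of $M$ is $t^d$, the Cayley--Hamilton theorem gives $M^2=t^dM+I$, hence the matrix recurrence $M^{n+1}=t^dM^n+M^{n-1}$. Writing $M^n=\left(\begin{smallmatrix} p_n & q_n\\ r_n & s_n\end{smallmatrix}\right)$, each entry satisfies $x_{n+1}=t^dx_n+x_{n-1}$. An easy induction, using that the recurrence has positive coefficients so that no leading term can cancel, shows that the $(1,1)$-entry $p_n$ is a polynomial in $t$ of degree exactly $nd$ with leading term $t^{nd}$, while the symmetric off-diagonal entries $q_n=r_n$ have degree $(n-1)d$ and $s_n$ has degree $(n-2)d$. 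Consequently the first coordinate of $f^n$ equals $p_n(z_2)z_0+q_n(z_2)z_1$, whose top monomial $z_2^{nd}z_0$ has total degree $nd+1$ and cannot be cancelled; as this dominates the other coordinates I obtain $\deg f^n=nd+1$.

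Finally, for the inverse I would either invert $f$ directly, obtaining $f^{-1}=(z_1,z_0-z_1z_2^d,z_2)$, which corresponds to $M^{-1}$, or simply note that $M^{-n}=(-1)^n\operatorname{adj}(M^n)$, so the entries of $M^{-n}$ are, up to sign, the very same polynomials $p_n,q_n,r_n,s_n$. Hence the largest $z_2$-degree among them is again $nd$, now realized in the second coordinate of $f^{-n}$, which yields $\deg f^{-n}=nd+1=\deg f^n$. The argument is essentially routine once the linearization is in place; the only point demanding care is the absence of degree cancellation in the iterates, which is exactly what the positivity of the recurrence (equivalently, the monic leading term $t^{nd}$ of $p_n$) guarantees.
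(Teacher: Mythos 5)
Your proof is correct, and it takes a genuinely different route from the paper's. The paper argues directly on compositions: writing $f^n=(f_{0,n},f_{1,n},z_2)$, the identity $f^n=f\circ f^{n-1}=\big(f_{1,n-1}+f_{0,n-1}z_2^d,\,f_{0,n-1},\,z_2\big)$ together with the observation that $f_{1,n-1}=f_{0,n-2}$ (so its degree lags strictly behind) gives the recursion $\deg f^n=\deg f^{n-1}+d$, and that is essentially the whole proof; the inverse statement is not argued separately, it being implicit that $f^{-1}=(z_1,z_0-z_1z_2^d,z_2)$ has the same shape as $f$ up to a linear swap of $z_0,z_1$ and a sign. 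You instead linearize: freezing $t=z_2$ turns $f$ into multiplication by the symmetric matrix $M=\bigl(\begin{smallmatrix} t^d & 1\\ 1 & 0\end{smallmatrix}\bigr)$ over $\mathbb{C}[z_2]$, Cayley--Hamilton gives the Fibonacci-type recurrence $M^{n+1}=t^dM^n+M^{n-1}$, positivity of the coefficients rules out cancellation of leading terms, and $M^{-n}=(-1)^n\operatorname{adj}(M^n)$ disposes of the inverse. What your version buys: the iterates are identified explicitly (the entries of $M^n$ are Fibonacci polynomials in $z_2^d$), and the equality $\deg f^n=\deg f^{-n}$ becomes structural rather than an afterthought, since $M^{-n}$ has the same entries as $M^n$ up to sign. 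What the paper's version buys: it is shorter, and its bookkeeping is the template that scales to the higher-dimensional automorphisms of Lemma~\ref{tec4}, where the added blocks (e.g. $z_4+z_0^pz_3$) are linear in the new variables but with coefficients involving variables that are \emph{not} fixed by the map, so the reduction of iteration to powers of a single matrix over a polynomial ring no longer applies verbatim and one must fall back on direct degree tracking.
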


\begin{proof}
Assume $n\geq 1$. Set $f^n=(f_{0,n},f_{1,n},z_2)$ and $\delta_n=\deg f^n$. Note 
that $\delta_1=d+1$ and since 
\[
f^n=ff^{n-1}=(f_{1,n-1}+f_{0,n-1}z_2^d,f_{0,n-1},z_2)
\]
one has 
\[
\delta_n=\max\big(\deg f_{1,n-1},\deg f_{0,n-1}+d,\deg f_{0,n-1},1\big).
\]
But $f_{1,n-1}=f_{0,n-2}$ and $d\geq 1$ so $\delta_n=\deg f_{0,n-1}+d=\delta_{n-1}+d$.
\end{proof}

Similarly one can prove:

\begin{lem}\label{tec4}
{\sl Let us consider the polynomial automorphism of $\mathbb{C}^5$ given by
\[
g=\big(z_1+z_0z_2^d,z_0,z_2,z_4+z_0^pz_3,z_3\big)
\]
where $p\geq d\geq 1$. One has for any $n\geq 1$
\[
\deg g^n=\frac{pd}{2}\,n^2+\frac{p(2-d)}{2}\,n+1
\]
and $\deg g^n=\deg g^{-n}$ for any $n\geq 1$.

Let us consider the polynomial automorphism of $\mathbb{C}^7$ given by
\[
h=\big(z_1+z_0z_2^d,z_0,z_2,z_4+z_0^pz_3,z_3,z_6+z_3^\ell z_5,z_5\big)
\]
where $\ell\geq p\geq d\geq 1$.
For any $n\geq 1$
\[
\deg h^n=1+\ell\left(1-\frac{p}{2}+\frac{pd}{3}\right)n+\frac{\ell p(1-d)}{2}
\,n^2+\frac{\ell pd}{6}\,n^3
\]
and $\deg h^n=\deg h^{-n}$.
}
\end{lem}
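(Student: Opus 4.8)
The plan is to imitate the proof of Lemma~\ref{tec1}, computing the degrees of $g^n$ and $h^n$ one coordinate-block at a time and exploiting the triangular structure of the maps. The first observation is that the three first components of $g$ depend only on $z_0,z_1,z_2$ and coincide with the automorphism $f$ of Lemma~\ref{tec1}; likewise the five first components of $h$ depend only on $z_0,\ldots,z_4$ and coincide with $g$. Writing $g^n=(g_{0,n},\ldots,g_{4,n})$, this already gives $g_{2,n}=z_2$, $\deg g_{0,n}=dn+1$ and $g_{1,n}=g_{0,n-1}$, so that the degrees of the first block are known from Lemma~\ref{tec1} and I can bootstrap from there.

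For the new block I would read off the recursions directly from $g^n=g\circ g^{n-1}$: one gets $g_{4,n}=g_{3,n-1}$ and $g_{3,n}=g_{3,n-2}+g_{0,n-1}^{\,p}g_{3,n-1}$. Setting $e_n=\deg g_{3,n}$ and using $\deg g_{0,n-1}=d(n-1)+1$, the degree of the product term is $p\,d(n-1)+p+e_{n-1}$, which dominates $\deg g_{3,n-2}=e_{n-2}$ and yields $e_n=e_{n-1}+pd(n-1)+p$ with $e_1=p+1$. Summing this telescoping recursion gives $e_n=\tfrac{pd}{2}n^2+\tfrac{p(2-d)}{2}n+1$, and since $p\geq d\geq 1$ this quadratic dominates the degrees of the remaining components, so $\deg g^n=e_n$ as claimed. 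For $h$ I would repeat the same step with the block $(z_5,z_6)$: here $h_{6,n}=h_{5,n-1}$ and $h_{5,n}=h_{5,n-2}+h_{3,n-1}^{\,\ell}h_{5,n-1}$, with $\deg h_{3,m}=e_m$ already computed. Thus $\deg h_{5,n}=(\ell+1)+\ell\sum_{m=1}^{n-1}e_m$, and evaluating the sum of the quadratic $e_m$ by the standard formulas for $\sum m^2$ and $\sum m$ produces the stated cubic; its leading coefficient is $\ell\cdot\tfrac{pd}{2}\cdot\tfrac13=\tfrac{\ell pd}{6}$, matching the claim, and $\ell\geq p\geq d\geq 1$ ensures this block dominates.

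The one point requiring care is to justify that the maxima in the degree recursions are always attained by the expected term, i.e. that no \emph{cancellation} occurs. Cancellation inside the products $g_{0,n-1}^{\,p}g_{3,n-1}$ (resp. $h_{3,n-1}^{\,\ell}h_{5,n-1}$) is impossible, since the degree of a product of polynomials is the sum of the degrees; and in the sums $g_{3,n-2}+(\cdots)$ the two summands have distinct degrees, the product term being strictly larger. I would package this as a short induction showing simultaneously that $(e_n)$ is strictly increasing, which closes the comparison $e_n=e_{n-1}+pd(n-1)+p>e_{n-2}$ at every step; the base cases $n=1,2$ are checked by hand.

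Finally, for the equalities $\deg g^n=\deg g^{-n}$ and $\deg h^n=\deg h^{-n}$ I would compute the inverses explicitly, obtaining $g^{-1}=(z_1,z_0-z_1z_2^d,z_2,z_4,z_3-z_1^pz_4)$ and the analogous seven-variable expression for $h^{-1}$. Conjugating $g^{-1}$ by the linear involution $\tau\colon(z_0,z_1,z_2,z_3,z_4)\mapsto(z_1,z_0,z_2,z_4,z_3)$ (and its seven-variable analogue for $h$) yields an automorphism of exactly the same shape as $g$ (resp. $h$), the only change being the sign of the coupling monomials, which does not affect any of the degree recursions above. Since conjugation by a linear map preserves degrees, $\deg g^{-n}=\deg g^n$ and $\deg h^{-n}=\deg h^n$ follow at once. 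The main difficulty is thus not conceptual but one of bookkeeping: correctly setting up the coupled recursions and verifying the no-cancellation comparison that makes them valid.
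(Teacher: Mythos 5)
Your proof is correct and follows essentially the same route as the paper: the paper proves Lemma~\ref{tec1} by writing $f^n=f\circ f^{n-1}$ and solving the resulting degree recursion, and states Lemma~\ref{tec4} with the remark ``Similarly one can prove,'' which is exactly the coupled-recursion argument you carry out (your telescoping sums reproduce the stated quadratic and cubic formulas exactly). Your additional details — the no-cancellation check and the computation of $\deg g^{-n}$, $\deg h^{-n}$ via conjugation of the explicit inverses by a coordinate-swapping involution — are correct fillings-in of steps the paper leaves implicit.
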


So Lemma \ref{tec1} gives an example $f$ of polynomial automorphism of $\mathbb{C}^3$ 
with linear growth; from $f$ one gets a polynomial automorphism $g$ of $\mathbb{C}^5$ 
such that $\deg g^n\sim n^2$, and from $g$ one gets a polynomial automorphism $h$ of 
$\mathbb{C}^7$ such that $\deg h^n\sim n^3$ (Lemma \ref{tec4}). By repeating this 
process one gets the following statement:

\begin{pro}\label{pro:ex}
{\sl There exist polynomial automorphisms $f$ of $\mathbb{C}^{2k+1}$, $k\geq 2$, such 
that 
\[
\deg f^n\sim n^k.
\]
}
\end{pro}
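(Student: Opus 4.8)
The plan is to iterate the two-block-building construction exhibited in Lemmas \ref{tec1} and \ref{tec4}. The key observation is that each new pair of coordinates, glued on top of a previous automorphism via an elementary ``shear'' of the form $z_{2j}\mapsto z_{2j}+z_{i}^{e_j}z_{2j-1}$ (where $z_i$ is a variable whose iterated degree is already controlled), raises the polynomial growth exponent by exactly one, provided the successive exponents $d\leq p\leq \ell\leq\ldots$ are chosen to be nondecreasing. Concretely, I would define the automorphism $f$ of $\mathbb{C}^{2k+1}$ inductively as
\[
f=\big(z_1+z_0z_2^{d_1},z_0,z_2,z_4+z_0^{d_2}z_3,z_3,z_6+z_3^{d_3}z_5,z_5,\ldots\big),
\]
with $d_1\leq d_2\leq\ldots\leq d_k$, so that the $(2k+1)$-dimensional map restricts on its first three coordinates to the linear-growth example of Lemma \ref{tec1}, on its first five coordinates to the quadratic example of Lemma \ref{tec4}, and so on.

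First I would set up the inductive framework precisely. Writing $f^n=(f_{0,n},f_{1,n},\ldots,f_{2k,n})$, I would observe, exactly as in the proof of Lemma \ref{tec1}, that the last coordinate $z_{2k}$ and the odd ``auxiliary'' coordinates are fixed or merely permuted, while each even coordinate of $f^n$ satisfies a recursion that expresses its degree as the degree of the coordinate one block below it, plus $d_j$ times the previously established exponent. The crucial point is that the recursion for the degree in the top block feeds on the degree of the variable $z_{2j-3}$ (or $z_0$) from the block beneath, whose iterated degree is already known by induction to be a polynomial in $n$ of degree $j-1$. Summing such a polynomial of degree $j-1$ over the iteration index produces a polynomial of degree $j$, which is precisely the increment we want. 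Carrying this out from $j=1$ up to $j=k$ yields $\deg f^n$ as a polynomial in $n$ of degree exactly $k$, hence $\deg f^n\sim n^k$.

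The main obstacle — and the step requiring genuine care rather than routine bookkeeping — is verifying that no ``cross terms'' or competing monomials in the maximum defining $\deg f^n$ ever overtake the leading term we are tracking, so that the degree is genuinely governed by the top block and grows like $n^k$ rather than collapsing to a lower power. This is exactly the role played by the hypotheses $d\leq p\leq\ell$ in Lemma \ref{tec4}: the nondecreasing condition on the exponents guarantees that the shear in block $j$ contributes strictly more to the degree than the lower blocks, so the maximum in the degree recursion is always achieved at the intended place. I would therefore isolate a lemma asserting that, under $d_1\leq d_2\leq\ldots\leq d_k$, the coordinate $f_{2j,n}$ has degree a polynomial in $n$ of degree $j$ with positive leading coefficient, and that this dominates all other coordinates; the inductive step is then a direct generalization of the computation already performed for $g$ and $h$.

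Finally, I would confirm that $f$ is indeed a polynomial automorphism: each elementary shear is invertible with polynomial inverse (the inverse of $z_{2j}\mapsto z_{2j}+z_i^{d_j}z_{2j-1}$ is again a shear of the same shape), so $f$ lies in $\mathrm{E}_{2k+1}\cdot\mathrm{Aff}_{2k+1}$ and is an automorphism, and the symmetry of the construction under inversion gives $\deg f^n=\deg f^{-n}$ as in the two base cases. Assembling the inductive degree computation with the invertibility remark then establishes the existence of a polynomial automorphism of $\mathbb{C}^{2k+1}$ with $\deg f^n\sim n^k$ for every $k\geq 2$, which is the content of Proposition \ref{pro:ex}.
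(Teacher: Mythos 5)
Your proposal is correct and follows essentially the same route as the paper: the paper obtains Proposition \ref{pro:ex} precisely by iterating the block construction of Lemmas \ref{tec1} and \ref{tec4}, appending at each stage a pair of variables $(z_{2j-1},z_{2j})$ sheared by a power of the variable $z_{2j-3}$ from the block below, with nondecreasing exponents, so that each new block raises the polynomial growth exponent by one. Your inductive bookkeeping (summing a degree-$(j-1)$ polynomial in the iteration index to get degree $j$, the exponent condition guaranteeing the top block dominates) simply makes explicit what the paper compresses into ``by repeating this process.''
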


Theorem \ref{thm:newdegree} follows from Proposition \ref{pro:ex}.

\subsection{A consequence}

Come back to $f=(z_1+z_0z_2^d,z_0,z_2)$ and consider the birational map of 
$\mathbb{P}^4_\mathbb{C}$ given by $F=(f,z_0^pz_3)$, that is $F=(z_1+z_0z_2^d,z_0,z_2,z_0^pz_3)$. 
Assume that $d\leq p$. Then one can prove by induction that for any $n\geq 1$
\[
\deg F^n=\frac{pd}{2}\,n^2+\frac{p(2-d)}{2}\,n+1.
\]
Let us now define the birational map $F$ of $\mathbb{P}^5_\mathbb{C}$ by
\[
G=(z_1+z_0z_2^d,z_0,z_2,z_0^pz_3,z_3^\ell z_4);
\]
for any $n\geq 1$ one has
\[
\deg G^n=\frac{\ell pd}{6}\,n^3+\, \left(1-\frac{3d}{4}\right)\ell p n^2+\left(\frac{13}{12}pd-2p+1\right)\,\ell n-\frac{\ell pd}{2}+\ell p+1.
\]

Repeating this process one gets 

\begin{pro}
{\sl If $k\geq 3$, then for any $0\leq \ell\leq k-2$ 
there exist birational maps $F$ of $\mathbb{P}^k_\mathbb{C}$ such that 
\[
\deg F^n\sim n^\ell.
\]}
\end{pro}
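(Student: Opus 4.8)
The plan is to realize each exponent $\ell$ in the range $0\le\ell\le k-2$ by prolonging the automorphism $f=(z_1+z_0z_2^d,z_0,z_2)$ of Lemma~\ref{tec1} with a chain of monomial coordinates, then padding with the identity. In the affine chart $\{z_k\neq 0\}$ of $\mathbb{P}^k_{\mathbb{C}}$, with coordinates $z_0,\ldots,z_{k-1}$, I would set
\[
F=\big(z_1+z_0z_2^d,\,z_0,\,z_2,\,z_0^{e_3}z_3,\,z_3^{e_4}z_4,\,\ldots,\,z_\ell^{e_{\ell+1}}z_{\ell+1},\,z_{\ell+2},\,\ldots,\,z_{k-1}\big)
\]
with integer exponents $d\le e_3\le e_4\le\cdots\le e_{\ell+1}$. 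The fourth coordinate is attached to the linearly growing $z_0$ (not to the frozen $z_2$), and each later chain coordinate to its predecessor. This uses the variables up to $z_{\ell+1}$, so it needs $\ell+1\le k-1$, that is $\ell\le k-2$, matching the statement; the cases $\ell=0$ (take $F$ linear) and $\ell=1$ (take $F=f$, padded) are immediate, and $\ell=2,3$ are the $\mathbb{P}^4,\mathbb{P}^5$ computations already done.

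First I would check $F$ is birational: its triangular form is inverted coordinate by coordinate, recovering $(z_0,z_1,z_2)$ from the first three entries via $f^{-1}\in\mathrm{Aut}(\mathbb{C}^3)$ and then $z_j=w_j/z_{j-1}^{e_j}$ down the chain, so $F^{-1}$ is rational. Since $F$ is polynomial in this chart, so is each iterate $F^n=(g_{0,n},\ldots,g_{k-1,n})$, and I would compute the affine degrees $\delta_{i,n}=\deg g_{i,n}$ by induction on $n$, assuming henceforth $2\le\ell\le k-2$. The first three entries reproduce $f^n$, so $\delta_{0,n}=dn+1$ by Lemma~\ref{tec1}; the chain obeys $g_{3,n}=(g_{0,n-1})^{e_3}g_{3,n-1}$ and $g_{j,n}=(g_{j-1,n-1})^{e_j}g_{j,n-1}$ for $4\le j\le\ell+1$, so, degrees adding in a product of nonzero polynomials, $\delta_{3,n}=\delta_{3,n-1}+e_3\,\delta_{0,n-1}$ and $\delta_{j,n}=\delta_{j,n-1}+e_j\,\delta_{j-1,n-1}$, with $\delta_{j,0}=1$. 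Iterated summation then shows $\delta_{3,n}$ is quadratic in $n$ and that each further chain step raises the degree in $n$ by one, so $\delta_{\ell+1,n}$ is a polynomial in $n$ of degree exactly $\ell$ with positive leading coefficient $d\,e_3\cdots e_{\ell+1}/\ell!$; the exponent ordering makes $\delta_{\ell+1,n}=\max_i\delta_{i,n}$ for all $n\ge 1$.

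It remains to pass from the affine to the projective degree. Homogenizing with $z_k$ gives $F^n=(\tilde g_{0,n}:\cdots:\tilde g_{k-1,n}:z_k^{D_n})$ where $D_n=\max_i\delta_{i,n}=\delta_{\ell+1,n}$. Any common factor of these homogeneous forms divides the last one, $z_k^{D_n}$, hence is a power of $z_k$; but $z_k$ does not divide $\tilde g_{\ell+1,n}$, whose affine degree already equals $D_n$ so that no compensating power of $z_k$ is introduced on homogenizing. Thus the forms are coprime and $\deg F^n=D_n=\delta_{\ell+1,n}\sim n^\ell$.

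The one step needing real care, beyond the bookkeeping of the summations, is this last passage: the affine recursion only controls the polynomial degree of each component, and I must rule out a degree drop when homogenizing and reducing the iterate. The coprimality argument above is what does this — the sole candidate common factor is a power of the homogenizing variable, excluded precisely because the top chain component attains the maximal degree. Confirming that this component genuinely dominates every other one for \emph{all} $n$ (so that $D_n$ is the honest degree and not merely an asymptotic bound) is where I would spend the effort, the exponent ordering $d\le e_3\le\cdots\le e_{\ell+1}$ being chosen exactly to secure it.
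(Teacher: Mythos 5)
Your proposal is correct and follows essentially the same route as the paper: the paper's examples $F=(z_1+z_0z_2^d,z_0,z_2,z_0^pz_3)$ on $\mathbb{P}^4_\mathbb{C}$ and $G=(z_1+z_0z_2^d,z_0,z_2,z_0^pz_3,z_3^\ell z_4)$ on $\mathbb{P}^5_\mathbb{C}$, followed by "repeating this process", are exactly your chain construction with the same exponent ordering, and your identity padding handles the smaller values of $\ell$. Your explicit treatment of the affine-to-projective degree passage (coprimality after homogenization) and of the domination $\delta_{\ell+1,n}=\max_i\delta_{i,n}$ only spells out what the paper leaves implicit in its induction.
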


\section{Properties $(\mathcal{P}_i)$}\label{sec:countereg}

Note that Theorem \ref{thm:steph} can be also stated as follows: the centra\-lizer of a 
polyonomial automorphism $f$ of $\mathbb{C}^2$ is countable if and only if 
$(\deg f^n)_{n\in\mathbb{N}}$ grows exponentially. This property is not true in higher 
dimension: there exist polynomial automorphisms of $\mathbb{C}^3$ with uncountable 
centralizer and exponential degree growth (\cite{Bisi}). Let us now that this is also 
the case for other properties, and in particular for $(\mathcal{P}_1)$, 
$(\mathcal{P}_2)$, $(\mathcal{P}_3)$, $(\mathcal{P}_4)$.

\subsection{Property $(\mathcal{P}_1)$}

Property $(\mathcal{P}_1)$ is not satisfied in dimension $3$:

\begin{pro}
{\sl If 
\[
f=\big(z_2,(z_2^2+z_0)^2+z_2^2+z_0+z_1,z_2^2+z_0\big)
\]
then 
\begin{itemize}
\item $\mathrm{Ind}(f)=\{z_2=z_3=0\}$ and $z_3=0$ is blown down 
by $f$ onto $(0:1:0:0)\in\mathrm{Ind}(f)$,

\item for all $n\geq 1$ one has $\deg f^n=2^{n+1}$.
\end{itemize}}
\end{pro}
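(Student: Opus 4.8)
The plan is to treat the two assertions separately: the first is a direct computation on the homogenisation of $f$, while the second is an induction on the explicit components of $f^n$.

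For the first bullet I would homogenise $f$ as a self-map of $\mathbb{P}^3_\mathbb{C}$ with coordinates $(z_0:z_1:z_2:z_3)$. Since $\deg f=4$, writing each coordinate as a form of degree $4$ gives
\[
f\colon(z_0:z_1:z_2:z_3)\dashrightarrow\big(z_2z_3^3:(z_2^2+z_0z_3)^2+z_2^2z_3^2+z_0z_3^3+z_1z_3^3:z_2^2z_3^2+z_0z_3^3:z_3^4\big).
\]
I would first check that these four forms have no common factor (the second one contains the monomial $z_2^4$, so $z_3$ does not divide it), so that they genuinely compute $\mathrm{Ind}(f)$ and the degree. Setting the last coordinate to zero forces $z_3=0$, and then the only surviving coordinate is $z_2^4$, which vanishes exactly when $z_2=0$; hence $\mathrm{Ind}(f)=\{z_2=z_3=0\}$. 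Restricting the same forms to $\{z_3=0\}$ yields $(0:z_2^4:0:0)=(0:1:0:0)$ for $z_2\neq 0$, so the hyperplane at infinity is contracted onto $(0:1:0:0)$, which lies in $\{z_2=z_3=0\}=\mathrm{Ind}(f)$. This exhibits the failure of algebraic stability, so that the (exponential) growth $2^{n+1}$ is genuinely smaller than the value $(\deg f)^n=4^n$ one would see in the stable case.

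For the degree formula I would write $f^n=(f_{0,n},f_{1,n},f_{2,n})$ and use $f^n=f\circ f^{n-1}$. Substituting $f^{n-1}$ into $f$ gives the recursions
\[
f_{0,n}=f_{2,n-1},\qquad f_{2,n}=f_{2,n-1}^2+f_{0,n-1},\qquad f_{1,n}=f_{2,n}^2+f_{2,n}+f_{1,n-1}
\]
the last one coming from the observation that $z_2^2+z_0$ evaluated at $f^{n-1}$ is exactly $f_{2,n}$. Introducing $u_n:=f_{2,n}$ and using $f_{0,n-1}=f_{2,n-2}$, the third coordinate satisfies $u_n=u_{n-1}^2+u_{n-2}$ with $u_0=z_2$ and $u_1=z_2^2+z_0$. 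An immediate induction then shows that the top-degree part of $u_n$ is $z_2^{2^n}$, so $\deg u_n=2^n$ for $n\geq 1$, with no cancellation at leading order.

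It remains to track the three degrees. One gets $\deg f_{0,n}=\deg u_{n-1}=2^{n-1}$ and $\deg f_{2,n}=\deg u_n=2^n$, while $f_{1,n}=u_n^2+u_n+f_{1,n-1}$ has, by induction, leading form $z_2^{2^{n+1}}$ of degree $2^{n+1}$: this term comes only from $u_n^2$ and strictly dominates $\deg u_n=2^n$ and $\deg f_{1,n-1}=2^n$. Taking the maximum yields $\deg f^n=2^{n+1}$. The only genuinely delicate point is this last no-cancellation verification — one must be sure the leading term of $u_n^2$ (a square of a nonzero form, hence nonzero over $\mathbb{C}$) is not killed by the lower-degree contributions $u_n$ and $f_{1,n-1}$; carrying along the explicit leading monomial $z_2^{2^n}$ of $u_n$ makes this transparent and simultaneously re-explains the contraction of $\{z_3=0\}$ onto $(0:1:0:0)$ obtained in the first part.
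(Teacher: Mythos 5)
Your proof is correct: the paper states this proposition without any proof, and your argument supplies exactly the natural verification it implicitly relies on — the homogenisation $\big(z_2z_3^3:(z_2^2+z_0z_3)^2+z_2^2z_3^2+z_0z_3^3+z_1z_3^3:z_2^2z_3^2+z_0z_3^3:z_3^4\big)$ has no common factor, which gives $\mathrm{Ind}(f)=\{z_2=z_3=0\}$ and the contraction of $\{z_3=0\}$ onto $(0:1:0:0)\in\mathrm{Ind}(f)$, while the recursion $u_n=u_{n-1}^2+u_{n-2}$ for the third coordinate, together with tracking the leading monomial $z_2^{2^n}$ (which rules out cancellation), yields $\deg f_{1,n}=2^{n+1}$ and hence $\deg f^n=2^{n+1}$. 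Both bullets check out, including the base cases $u_0=z_2$, $u_1=z_2^2+z_0$ and $\deg f=4=2^2$, so nothing needs to be added.
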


\subsection{Property $(\mathcal{P}_2)$}

Property $(\mathcal{P}_2)$ does not hold in higher dimension:

\begin{pro}
{\sl The polynomial automorphism $f$ of $\mathbb{C}^3$ given by 
\[
\big(z_0^2+z_1,z_0,z_2+1\big)
\]
preserves the fibration $z_2=$ cst and for all $n\geq 1$ the equality 
$\deg f^n=2^n$ holds.

The polynomial automorphism $g$ of $\mathbb{C}^3$ given by 
\[
\big(z_1^2+z_0z_1+z_2,z_1+1,z_0\big)
\]
preserves the fibration $z_1=$ cst and for all $n\geq 1$ the equality 
$\deg g^n=n+1$ holds.}
\end{pro}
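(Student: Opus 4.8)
The plan is to verify both claims by explicit induction on $n$, exactly mirroring the structure of the proof of Lemma \ref{tec1}. I will treat the two polynomial automorphisms separately, and in each case the strategy is the same: compute $f^n = f \circ f^{n-1}$ coordinate by coordinate, track the degree of each component as a function of $n$, and deduce a recurrence for $\deg f^n$ that I can solve in closed form.

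First I would handle $f = (z_0^2+z_1, z_0, z_2+1)$. The last coordinate is an affine shift decoupled from the first two, so it contributes nothing to the degree beyond what happens in the $(z_0,z_1)$-plane. Writing $f^n = (f_{0,n}, f_{1,n}, z_2+n)$, the recursion $f^n = f \circ f^{n-1}$ gives $f_{0,n} = f_{0,n-1}^2 + f_{1,n-1}$ and $f_{1,n} = f_{0,n-1}$. Since $f_{1,n-1} = f_{0,n-2}$, the dominant term comes from the squaring, so $\deg f_{0,n} = 2\deg f_{0,n-1}$, and with $\deg f_{0,1} = 2$ this yields $\deg f^n = 2^n$. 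The point that the fibration $z_2 = \text{cst}$ is preserved is immediate from the third coordinate $z_2 + 1$, which fixes each level set setwise (mapping $\{z_2 = c\}$ to $\{z_2 = c+1\}$, so the foliation is invariant). This part is routine and I expect no obstacle.

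For $g = (z_1^2 + z_0 z_1 + z_2, z_1+1, z_0)$, the invariance of the fibration $z_1 = \text{cst}$ is again read off directly from the second coordinate $z_1 + 1$. For the degree, I write $g^n = (g_{0,n}, g_{1,n}, g_{2,n})$ and set up the recursion from $g^n = g \circ g^{n-1}$. Here $g_{1,n} = g_{1,n-1} + 1$ so $g_{1,n} = z_1 + n$, which has degree $1$, and $g_{2,n} = g_{0,n-1}$. The first coordinate is $g_{0,n} = g_{1,n-1}^2 + g_{0,n-1} g_{1,n-1} + g_{2,n-1}$. The subtlety, and the only place where care is needed, is that $g_{1,n-1}$ has degree $1$ rather than growing, so the term $g_{1,n-1}^2$ is only degree $2$ and does not drive exponential growth; instead the dominant contribution is the cross term $g_{0,n-1}\, g_{1,n-1}$, whose degree is $\deg g_{0,n-1} + 1$. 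Combined with $g_{2,n-1} = g_{0,n-2}$ of lower degree, this gives the linear recurrence $\deg g_{0,n} = \deg g_{0,n-1} + 1$, and with $\deg g_{0,1} = 2$ one solves $\deg g_{0,n} = n+1$, hence $\deg g^n = n+1$.

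The one step where I would be most careful is confirming that no cancellation of leading terms occurs in the recursion for $g_{0,n}$: I must check that the top-degree part of $g_{0,n-1}\,g_{1,n-1}$ does not cancel against any other contribution, so that the degree genuinely increases by exactly one at each step rather than stagnating or dropping. Since $g_{1,n-1} = z_1 + (n-1)$ has leading term $z_1$ and multiplication by a single variable cannot annihilate the leading homogeneous part of $g_{0,n-1}$, no such cancellation is possible, and the linear growth $\deg g^n = n+1$ follows by a clean induction.
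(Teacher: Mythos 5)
Your proof is correct and takes essentially the approach the paper itself relies on: the paper states this proposition without proof, and its model computation (the proof of Lemma~\ref{tec1}) is exactly your method of writing $f^n=f\circ f^{n-1}$, tracking component degrees, and solving the resulting recurrence. Your verification that the leading term of $g_{0,n-1}\,g_{1,n-1}$ cannot cancel (since $\deg g_{0,n-1}+1$ strictly exceeds both $2$ and $\deg g_{0,n-2}$) correctly handles the only delicate point.
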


\subsection{Property $(\mathcal{P}_3)$}

Property $(\mathcal{P}_3)$ is not satisfied in dimension $3$.

\begin{pro}
{\sl Let us consider the polynomial automorphisms $f$ and $g$ of $\mathbb{C}^3$ 
defined by 
\begin{align*}
&f=\big(z_0+z_1+z_2,z_0^2+z_0+z_1,z_0\big),
&&
g=\big(z_1^2+z_0+z_1+z_2,z_1,z_0\big).
\end{align*}
For all $n\geq 1$ one has 
\[
\left\{
\begin{array}{lll}
\deg f^{2n}=2^{n+1},\,\deg f^{2n+1}=2^{n+1} \\
\deg f^{-n}=2^n\\
\deg g^n=\deg g^{-n}=2
\end{array}
\right.
\]

The automorphism $f$ sends $z_3=0$ onto $(0:1:0:0)$ and $f^{-1}$ sends $z_3=0$ onto 
$(0:1:1:0)$. Furthermore $\mathrm{Ind}(f)=\{z_0=z_3=0\}$ and  
$\mathrm{Ind}(f^{-1})=\{z_2=z_3=0\}$.

The automorphism $g$ sends $z_3=0$ onto $(1:0:0:0)$ and $g^{-1}$ sends $z_3=0$ onto 
$(0:0:1:0)$. Besides  $\mathrm{Ind}(g)=\{z_1=z_3=0\}$ and
$\mathrm{Ind}(g^{-1})=\{z_2=z_3=0\}$.
}
\end{pro}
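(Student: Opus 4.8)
The plan is to verify the stated formulas for $\deg f^n$, $\deg f^{-n}$, $\deg g^n$ and $\deg g^{-n}$, and then read off the geometry at infinity from the explicit leading terms. The strategy for each degree sequence is the same recursive bookkeeping used in the proof of Lemma~\ref{tec1}: write $f^n = ff^{n-1}$, track the degrees of each coordinate of $f^{n-1}$, and determine which coordinate dominates after one further composition. For $f=(z_0+z_1+z_2,z_0^2+z_0+z_1,z_0)$ the middle coordinate is the only one of degree $2$, so composing squares the dominant degree roughly every other step; I would set $f^n=(f_{0,n},f_{1,n},f_{2,n})$, record the degrees $(\deg f_{0,n},\deg f_{1,n},\deg f_{2,n})$, and show by induction that the degree doubles with the two-step periodicity reflected in the formulas $\deg f^{2n}=\deg f^{2n+1}=2^{n+1}$. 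The inverse $f^{-1}$ must be computed explicitly once, after which the same recursive argument gives $\deg f^{-n}=2^n$. For $g$, which is linear in $z_0$ and $z_2$ and quadratic only through $z_1^2$, I expect the degree to stay constant at $2$ in both directions, which should follow from checking that the quadratic term never feeds back into a growing coordinate.

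With the degree sequences in hand, the second half is the geometric computation at infinity. Homogenising with the coordinate $z_3$ so that $\mathbb{C}^3\subset\mathbb{P}^3_\mathbb{C}$ via $(z_0:z_1:z_2:z_3)$, the hyperplane at infinity is $\{z_3=0\}$. I would homogenise each of $f$ and $g$ to its top degree, restrict to $\{z_3=0\}$, and compute the image of the generic point of this hyperplane to obtain the claimed points $(0:1:0:0)$, $(0:1:1:0)$, $(1:0:0:0)$ and $(0:0:1:0)$; the degenerate (constant) image confirms that the hyperplane is blown down. The indeterminacy loci $\mathrm{Ind}(f)$ and $\mathrm{Ind}(f^{-1})$ are then the common zero sets of the homogenised coordinates, which I would solve directly to recover $\{z_0=z_3=0\}$, $\{z_2=z_3=0\}$, $\{z_1=z_3=0\}$ and $\{z_2=z_3=0\}$.

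The point of the proposition is that $f$ and $g$ have genuinely different degree growths --- $(\deg f^n)$ grows exponentially while $(\deg g^n)$ is bounded --- yet the four-tuples $\big(f(H_\infty),f^{-1}(H_\infty),\mathrm{Ind}(f),\mathrm{Ind}(f^{-1})\big)$ and the corresponding one for $g$ are configured identically: each sends $H_\infty$ to a single point, and in each case the two indeterminacy lines are a coordinate line in $H_\infty$ and the line $\{z_2=z_3=0\}$. So after the computations I would simply observe that these configurations match under a projective change of coordinates, contradicting the naive extension of $(\mathcal{P}_3)$ to dimension three. The main obstacle is purely computational rather than conceptual: composing these quadratic maps and keeping the homogenisation and cancellation of common factors correct at each step is error-prone, and care is needed to check that no unexpected common factor drops the degree below the predicted value in the inductive step for $\deg f^{-n}$.
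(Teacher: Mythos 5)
Your strategy --- recursive coordinate-degree bookkeeping modelled on the proof of Lemma~\ref{tec1}, one explicit computation of each inverse, and homogenisation with respect to $z_3$ to read off the geometry at infinity --- is the right one, and it is exactly what the paper leaves implicit (the proposition is stated there without proof). The genuine problem is that your proposal defers every computation and promises it will \emph{confirm} the stated formulas; carried out honestly, it does not. Writing $f^{n+1}=f\circ f^{n}$ and $(a_n,b_n,c_n)$ for the coordinate degrees of $f^n$, the recursion is
\[
a_{n+1}=\max(a_n,b_n,c_n),\qquad b_{n+1}=\max(2a_n,b_n),\qquad c_{n+1}=a_n,
\]
with $(a_1,b_1,c_1)=(1,2,1)$, and the top-degree homogeneous parts are sums of squares and of polynomials with nonnegative coefficients, so no cancellation ever occurs. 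This yields $\deg f^n=2,2,4,4,8,8,16,\ldots$, that is $\deg f^{2n}=2^{n}$ and $\deg f^{2n+1}=2^{n+1}$: the even-power formula $\deg f^{2n}=2^{n+1}$ you propose to establish by induction is false (already $\deg f^2=2$, since the second coordinate of $f^2$ is $(z_0+z_1+z_2)^2+(z_0+z_1+z_2)+z_0^2+z_0+z_1$), so that induction cannot close. What survives is exponential growth with $\lambda(f)=\sqrt{2}$, together with the correct claims $\deg f^{-n}=2^n$ and $\deg g^{n}=\deg g^{-n}=2$ (the latter because $z_1\circ g=z_1$, so the quadratic term never feeds a growing coordinate, as you anticipated).

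The second failure point is in the geometry, and it undercuts your closing paragraph. Computing $g^{-1}=(z_2,\,z_1,\,z_0-z_1^2-z_1-z_2)$ and homogenising gives
\[
(z_0:z_1:z_2:z_3)\dashrightarrow\big(z_2z_3:z_1z_3:z_0z_3-z_1^2-z_1z_3-z_2z_3:z_3^2\big),
\]
whose base locus is $\{z_1=z_3=0\}$, \emph{not} $\{z_2=z_3=0\}$; likewise $f^{-1}=(z_2,\,z_1-z_2^2-z_2,\,z_0-z_1+z_2^2)$ sends $\{z_3=0\}$ to $(0:-1:1:0)$, not $(0:1:1:0)$. Consequently $\mathrm{Ind}(g)=\mathrm{Ind}(g^{-1})=\{z_1=z_3=0\}$ is a single line, while $\mathrm{Ind}(f)=\{z_0=z_3=0\}$ and $\mathrm{Ind}(f^{-1})=\{z_2=z_3=0\}$ are two distinct lines, and moreover $f^{-1}(H_\infty)\notin\mathrm{Ind}(f^{-1})$ whereas $g^{-1}(H_\infty)\in\mathrm{Ind}(g^{-1})$. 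So the two four-tuples are not projectively equivalent configurations, and the observation you planned to make at the end --- that they ``match under a projective change of coordinates'' --- is precisely the step that fails; any counterexample to $(\mathcal{P}_3)$ extracted from this pair must rely on a weaker notion of ``same configuration'' (both maps contract $H_\infty$ to a point lying on their indeterminacy line, both inverses contract $H_\infty$ to a point). Note also that the danger you flag, unexpected common factors lowering degrees, is not where the trouble lies: the trouble is that several of the claimed identities are themselves miscomputed, and a verification plan that assumes the answers in advance is structurally unable to detect that.
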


\subsection{Property $(\mathcal{P}_4)$}

In \cite{Furter} Furter proves that if $f$ is a polynomial automorphism of $\mathbb{C}^2$ then
$\deg f^2=(\deg f)^2$ if and only if $\deg f^n=(\deg f)^n$ for all $n\in\mathbb{N}$. 
This property does not hold in higher dimension: consider for 
instance the polynomial automorphism $f$ given by 
\[
f=(z_1^2+z_5,z_5^2+z_4,z_2,z_1,z_0,z_4^2+z_3).
\]
One can check that $\deg f=2$, $\deg f^2=4$, $\deg f^3=8$ but $\deg f^4=8$.

\medskip

Let $f$ be a polynomial automorphism of 
$\mathbb{C}^k$. For any integer $n\geq 0$ set 
$\Omega_n=f^n\big((z_{k-1}=0)\smallsetminus\mathrm{Ind}(f^n)\big)$; note that 
$\Omega_n\subset(z_{k-1}=0)$ for any $n$. We say that 
\textbf{\textit{$f$ is not algebraically stable after $\ell$ steps}} if 
$\ell$ is the smallest integer such that $\Omega_\ell\subset\mathrm{Ind}(f)$.

\smallskip 

Let us first remark that if $\Omega_1\cap\mathrm{Ind}(f)=\emptyset$, then
$\Omega_2=f(\Omega_1)\subseteq\Omega_1$ so 
$\Omega_2\cap\mathrm{Ind}(f)=\emptyset$. By induction one gets for any 
$n\geq 1$ that 
$\Omega_n\cap\mathrm{Ind}(f)=\emptyset$ and $\deg f^n=(\deg f)^n$, \emph{i.e.}
$f$ is algebraically stable. 

\smallskip

Let us now assume that $\Omega_1\cap\mathrm{Ind}(f)\not=\emptyset$. Then:

\begin{enumerate}
\item Either $\Omega_1\subset\mathrm{Ind}(f)$, that is $f$ is not algebraically
stable after $1$ step.

\item Or $\Omega_1\not\subset\mathrm{Ind}(f)$ hence
$\Omega_2=f\big(\Omega_1\smallsetminus\mathrm{Ind}(f)\big)\subseteq\Omega_1$.
\begin{itemize}
\item Either $\dim\Omega_2=\dim\Omega_1$, so $\Omega_2=\Omega_1$ and then 
$\Omega_n=\Omega_1$ for any $n$; in particular 
$\Omega_n\not\subset\mathrm{Ind}(f)$ for any $n$ and $f$ is algebraically 
stable.

\item Or $\dim\Omega_2<\dim\Omega_1$, then either 
$\Omega_2\subset\mathrm{Ind}(f)$ and $f$ is not algebraically stable after
$2$ steps or $\Omega_2\not\subset\mathrm{Ind}(f)$ and we come back to the 
previous alternative, that is either 
$\dim\Omega_3=\dim\Omega_2$ or $\dim\Omega_3<\dim\Omega_2$. Since for any 
$n$ one has $0\leq\dim\Omega_n\leq k-1$ one gets that either $f$ is 
algebraically stable, or $f$ is not algebraically stable after at most
$k-1$ steps.
\end{itemize}
\end{enumerate}

Hence one can state:

\begin{pro}
{\sl Let $f$ be a polynomial automorphism of $\mathbb{C}^k$. Either
$f$ is algebraically stable, or $f$ is not algebraically stable after
$\ell$ steps, with $\ell\leq k-1$.

In other words $\deg f^i=(\deg f)^i$ for $1\leq i\leq k$ if and only if $\deg f^n=(\deg f)^n$ for any $n\geq 1$.}
\end{pro}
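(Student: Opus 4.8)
The plan is to reduce the whole statement to a single step-by-step criterion relating the drop of degree at one iteration to the position of $\Omega_n$ with respect to $\mathrm{Ind}(f)$; the first sentence is then nothing but the preceding discussion, and the ``in other words'' follows by locating the first degree drop.

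First I would establish the criterion. Writing $f=[F_0:\ldots:F_k]$ and $f^n=[G_0:\ldots:G_k]$ in homogeneous coordinates with the $G_i$ sharing no common factor, the iterate $f^{n+1}=f\circ f^n$ has components $F_i(G_0,\ldots,G_k)$, all of degree $\deg f\cdot\deg f^n$; hence
\[
\deg f^{n+1}=\deg f\cdot\deg f^n-\deg\Delta_n,\qquad \Delta_n:=\gcd\big(F_0(G),\ldots,F_k(G)\big).
\]
The decisive observation is that $f^n$, being a polynomial automorphism, restricts to an isomorphism of $\mathbb{C}^k$, so the only hypersurface it can contract — and the only hypersurface whose image can lie in the codimension $\geq 2$ set $\mathrm{Ind}(f)$ — is the hyperplane at infinity $H_\infty=\{z_k=0\}$. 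Therefore $\Delta_n$ is a power of $z_k$, and $z_k\mid\Delta_n$ exactly when $f^n(H_\infty)=\Omega_n\subset\mathrm{Ind}(f)$. This yields
\[
\deg f^{n+1}=\deg f\cdot\deg f^n\iff\Omega_n\not\subset\mathrm{Ind}(f).
\]

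The first assertion is then exactly the content of the discussion above: the $\Omega_n$ form a nested chain $\Omega_{n+1}\subseteq\Omega_n\subseteq H_\infty$ of irreducible sets, so their dimensions are non-increasing in $\{0,1,\ldots,k-1\}$; either the dimension stabilises, forcing $\Omega_n\not\subset\mathrm{Ind}(f)$ for all $n$ and hence, by the criterion, algebraic stability, or it strictly decreases until some $\Omega_\ell$ falls into $\mathrm{Ind}(f)$, the dimension bookkeeping giving $\ell\leq k-1$. To obtain the ``in other words'' I would argue by contradiction: suppose $\deg f^i=(\deg f)^i$ for $1\leq i\leq k$ but $f$ is not algebraically stable, and let $\ell\leq k-1$ be smallest with $\Omega_\ell\subset\mathrm{Ind}(f)$. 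Minimality gives $\Omega_j\not\subset\mathrm{Ind}(f)$ for $j<\ell$, so the criterion yields $\deg f^{j+1}=\deg f\cdot\deg f^j$ for $j\leq\ell-1$, whence $\deg f^\ell=(\deg f)^\ell$; but at the next step $\Omega_\ell\subset\mathrm{Ind}(f)$ forces $\deg f^{\ell+1}<\deg f\cdot\deg f^\ell=(\deg f)^{\ell+1}$. Since $\ell+1\leq k$, this contradicts the hypothesis, so $f$ must be algebraically stable, i.e. $\deg f^n=(\deg f)^n$ for every $n\geq1$. The reverse implication is immediate.

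I expect the main obstacle to be the clean justification of the criterion, namely that the common factor $\Delta_n$ is supported only along $H_\infty$ — equivalently, that a polynomial automorphism contracts no hypersurface other than the one at infinity and sends no other hypersurface into $\mathrm{Ind}(f)$ — and that the divisibility $z_k\mid\Delta_n$ captures precisely the set-theoretic inclusion $\Omega_n\subset\mathrm{Ind}(f)$, with no multiplicity or closure subtlety interfering.
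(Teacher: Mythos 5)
Your proof is correct, and its geometric half is the same as the paper's: the paper's entire proof of the first assertion is the discussion preceding the proposition, namely the nested chain $\Omega_{n+1}\subseteq\Omega_n\subseteq H_\infty$ and the step-by-step dichotomy ``either the dimension stabilises, hence $\Omega_{n+1}=\Omega_n$ and $f$ is algebraically stable, or the dimension drops'', with $0\leq\dim\Omega_n\leq k-1$ giving $\ell\leq k-1$. The genuine difference is how the ``in other words'' sentence is obtained. The paper simply asserts it, implicitly relying on the equivalence stated without proof in its Section 2.3 (algebraic stability $\Leftrightarrow$ $\deg f^n=(\deg f)^n$ for all $n$); but that global equivalence alone does not suffice, since failure of stability after $\ell$ steps would only yield \emph{some} $n$ with $\deg f^n<(\deg f)^n$, with no a priori bound $n\leq k$. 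Your gcd criterion is exactly the missing quantitative link: writing $\deg f^{n+1}=\deg f\cdot\deg f^n-\deg\Delta_n$, where $\Delta_n$ is the common factor of the components of $f\circ f^n$ in homogeneous coordinates, and observing that $f^n(\mathbb{C}^k)=\mathbb{C}^k$ while $\mathrm{Ind}(f)\subset H_\infty$, every irreducible factor of $\Delta_n$ must be a scalar multiple of $z_k$, so $\deg\Delta_n>0$ if and only if $\Omega_n\subset\mathrm{Ind}(f)$. This localises the first degree drop at step $\ell+1\leq k$, which turns the paper's gloss into an actual proof (and reproves the Section 2.3 equivalence along the way). Two informalities you share with the paper rather than introduce: the inclusion $\Omega_{n+1}\subseteq\Omega_n$ and the implication ``equal dimension $\Rightarrow$ equal set $\Rightarrow$ stabilisation'' are really statements about closures of constructible sets; and extracting $\ell\leq k-1$ (rather than $\ell\leq k$) from the dimension count needs the extra remark that if $\Omega_1$ is dense in $H_\infty$ then every $\Omega_n$ is dense in $H_\infty$ and no drop can ever occur, so that in the strictly decreasing branch one in fact has $\dim\Omega_1\leq k-2$.
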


\section{Birational maps with new polynomial growths}

Let us first recall that 

\begin{lem}[\cite{DillerFavre}]
{\sl The birational self map $\varphi$ of $\mathbb{P}^2_\mathbb{C}$ given in the affine chart $z_2=1$ by
\[
\varphi(z_0,z_1)=\left(z_1+\frac{2}{3},z_0\frac{z_1-\frac{1}{3}}{z_1+1}\right)
\]
satisfies $\deg \varphi^n\sim n^2$.}
\end{lem}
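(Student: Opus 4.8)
The plan is to regard $\varphi$ as a birational self-map of $\mathbb{P}^2_\mathbb{C}$, to make it algebraically stable after finitely many blow-ups in the manner of Diller--Favre, and to read the degree growth off the induced linear action on the Picard group. First I would homogenise: with $z_0 = Z_0/Z_2$, $z_1 = Z_1/Z_2$ one finds
\[
\varphi(Z_0:Z_1:Z_2) = \big((3Z_1+2Z_2)(Z_1+Z_2) : Z_0(3Z_1-Z_2) : 3Z_2(Z_1+Z_2)\big),
\]
so $\deg\varphi = 2$. The Jacobian factors as $(3Z_1-Z_2)(Z_1+Z_2)^2$, revealing that $\varphi$ contracts $L_1 = \{Z_1+Z_2=0\}$ onto $p_1 = (0:1:0)$ and $L_2 = \{3Z_1-Z_2=0\}$ onto $p_2 = (1:0:1)$, the square signalling an infinitely near base point; one checks $\mathrm{Ind}(\varphi) = \{q_1, q_2\}$ with $q_1 = (1:0:0) = L_1\cap L_2$ and $q_2 = (0:1:-1)$, while the line $\{Z_2=0\}$ is invariant.

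The decisive step is the orbit analysis detecting the failure of algebraic stability. Propagating the two contracted points forward by $\varphi$ until they meet the indeterminacy locus gives
\[
p_1 \longmapsto q_1, \qquad p_2 \longmapsto (2:-1:3) \longmapsto (1:-2:3) \longmapsto q_2,
\]
so the image of $L_1$ reaches $\mathrm{Ind}(\varphi)$ after one step and that of $L_2$ after three. Thus $\varphi$ is not algebraically stable, but both obstructing orbits are finite --- exactly the setting in which the Diller--Favre regularisation applies.

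I would then blow up the six points of these two orbits (together with the infinitely near base point carried by the double factor in the Jacobian), obtaining a rational surface $X$ with $\mathrm{Pic}(X) \cong \mathbb{Z}^{1+N}$ on which the lift $\tilde\varphi$ is algebraically stable. On such a model $(\varphi^n)^* = (\tilde\varphi^*)^n$, so $\deg\varphi^n$ equals the $H$-coordinate of $(\tilde\varphi^*)^n H$ in the basis $H, E_1, \dots, E_N$, and its growth is dictated by the Jordan form of $\tilde\varphi^*$. Computing the characteristic polynomial, I expect to find a product of cyclotomic factors --- whence $\lambda(\varphi) = 1$ and, by the Diller--Favre dichotomy recalled above, the growth is bounded, linear or quadratic --- with the eigenvalue $1$ carrying a Jordan block of size exactly $3$; this is what forces $\deg\varphi^n \sim n^2$.

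The main obstacle is precisely this last point: separating quadratic from linear growth amounts to proving that the block at $1$ has size $3$ rather than $2$, and to doing the bookkeeping of the infinitely near base point correctly so that $\tilde\varphi^*$ is computed on the right lattice. Geometrically the distinction is that $\varphi$ preserves a genuine elliptic (rather than rational) fibration, so a cleaner alternative would be to exhibit the invariant pencil of cubics directly --- its base points being the blown-up points, with $\{Z_2=0\}$ among the degenerate fibres --- and to check that $\varphi$ restricts to a general fibre as translation by a point of infinite order; quadratic growth then follows from the dichotomy together with $\lambda(\varphi)=1$ and the unboundedness of $(\deg\varphi^n)_{n\in\mathbb{N}}$.
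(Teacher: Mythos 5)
Your preparatory computations are all correct, and I could verify each of them: the homogenisation $\big((3Z_1+2Z_2)(Z_1+Z_2) : Z_0(3Z_1-Z_2) : 3Z_2(Z_1+Z_2)\big)$, the Jacobian $(3Z_1-Z_2)(Z_1+Z_2)^2$ up to a constant, the contractions of $L_1$ onto $(0:1:0)$ and of $L_2$ onto $(1:0:1)$, the indeterminacy set $\{(1:0:0),(0:1:-1)\}$, and the two orbits $p_1\mapsto q_1$ and $p_2\mapsto(2:-1:3)\mapsto(1:-2:3)\mapsto q_2$. So you have correctly shown that $\varphi$ is not algebraically stable on $\mathbb{P}^2_\mathbb{C}$ and that the obstructing orbits are finite. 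Note that this already goes beyond the text you are compared against: the paper gives no proof of this lemma, it attributes it to \cite{DillerFavre}, where this example is treated, and afterwards merely records, ``by iteration'', the relations $p_n=s_{n-1}+1$, $q_n=s_{n-1}$, $r_n=s_n+1$, $\deg\varphi^n=s_{n-1}+s_n+1$ for use in the higher-dimensional constructions.

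However, your proposal stops exactly where the content of the lemma begins, so there is a genuine gap. The entire quantitative assertion --- the exponent $2$ --- rests on the claim that the characteristic polynomial of $\tilde\varphi^*$ is a product of cyclotomic factors with a Jordan block of size exactly $3$ at the eigenvalue $1$, and you present this as an expectation (``I expect to find'') rather than a result; you even flag it yourself as ``the main obstacle''. As written, the argument does not even establish $\lambda(\varphi)=1$: until the action of $\tilde\varphi^*$ on $\mathrm{Pic}(X)$ is actually computed --- which requires first checking that the lift to your chosen blow-up really is algebraically stable, i.e.\ that no curve of $X$ is contracted onto an orbit meeting $\mathrm{Ind}(\tilde\varphi)$ --- the growth could a priori be bounded, linear, quadratic or exponential. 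The alternative route you sketch (an invariant pencil of cubics on which $\varphi$ acts fibrewise as a translation of infinite order) has the same defect: neither the pencil nor the infinite-order statement is exhibited, and each is again precisely the kind of fact whose verification constitutes the proof. To close the gap you must carry one of the two computations to the end: either write down $\tilde\varphi^*$ in the basis $H,E_1,\dots,E_N$ of $\mathrm{Pic}(X)$ and display the size-$3$ Jordan block, or produce the invariant cubic pencil explicitly and verify the infinite order of the translation. Both are finite, concrete computations, and the first is essentially what is carried out in \cite{DillerFavre}.
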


Denote $\varphi^n$ by $\left(\frac{P_n}{Q_n},\frac{R_n}{S_n}\right)$ where $P_n$, $Q_n$, $R_n$ and $S_n$ denote some 
elements of $\mathbb{C}[z_0,z_1]$ without common factor. 
Set $p_n=\deg P_n$, $q_n=\deg Q_n$, $r_n=\deg R_n$
and $s_n=\deg S_n$. The following equalities hold (by iteration)
\[
\left\{
\begin{array}{llll}
p_n=s_{n-1}+1\\
q_n=s_{n-1}\\
r_n=s_n+1\\
\deg \varphi^n=s_{n-1}+s_n+1
\end{array}
\right.
\]

Let us now consider the birational self map of $\mathbb{P}^3_\mathbb{C}$ given in the affine chart $z_3=1$ by
\[
\Psi_3(z_0,z_1,z_2)=\left(z_1+\frac{2}{3},z_0\frac{z_1-\frac{1}{3}}{z_1+1},z_0z_2\right).
\]
One can also check that 
\[
\Psi_3=\left(\frac{P_n}{Q_n},\frac{R_n}{S_n},\frac{U_n}{V_n}\right) 
\]
where $U_n=z_0z_2P_1P_2\ldots P_{n-1}$ and $V_n=Q_1Q_2\ldots Q_{n-1}$ have no common factor. Since $s_i\sim i^2$ one gets:

\begin{lem}
{\sl The birational self map $\Psi_3$ of $\mathbb{P}^3_\mathbb{C}$ satisfies
\[
\deg \Psi_3\sim n^3.
\]}
\end{lem}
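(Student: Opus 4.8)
The plan is to reduce the computation of $\deg\Psi_3^n$ to the already-known growth $\deg\varphi^n\sim n^2$ by exploiting the skew-product structure of $\Psi_3$ over $\varphi$. First I would record that, since neither of the first two components of $\Psi_3$ involves $z_2$, the first two coordinates of $\Psi_3^n$ are exactly $\frac{P_n}{Q_n}$ and $\frac{R_n}{S_n}$, the coordinates of $\varphi^n$; and that the third coordinate $W_n$ obeys the recursion $W_n=\frac{P_{n-1}}{Q_{n-1}}W_{n-1}$ with $W_1=z_0z_2$, whence $\frac{U_n}{V_n}=z_0z_2\prod_{i=1}^{n-1}\frac{P_i}{Q_i}$, as recalled above. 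Using the relations $p_i=s_{i-1}+1$ and $q_i=s_{i-1}$ established for $\varphi$, this gives the two degree identities
\[
\deg U_n=2+\sum_{i=1}^{n-1}p_i=(n+1)+\sum_{i=0}^{n-2}s_i,\qquad \deg V_n=\sum_{i=1}^{n-1}q_i=\sum_{i=0}^{n-2}s_i,
\]
so in particular $\deg U_n-\deg V_n=n+1$.

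Next I would pass from these affine data to the \emph{homogeneous} degree of $\Psi_3^n$ by writing the map of $\mathbb{P}^3_\mathbb{C}$ as $(F_0:F_1:F_2:F_3)$ with $F_0,\ldots,F_3$ coprime and homogeneous of degree $D=\deg\Psi_3^n$, and by squeezing $D$ between an upper and a lower bound. For the upper bound I clear denominators with the common denominator $L=Q_nS_nV_n$: the four numerators have affine degrees $\deg L+1$, $\deg L+1$, $\deg L+(n+1)$ and $\deg L$, so after homogenising $D\le\deg L+n+1=\deg V_n+s_{n-1}+s_n+n+1$. For the lower bound I use that $\frac{F_2}{F_3}=\frac{U_n}{V_n}$ in the chart $z_3=1$; since $U_n$ and $V_n$ have no common factor (as established above), $V_n$ divides $F_3|_{z_3=1}$, and therefore $D=\deg F_3\ge\deg\big(F_3|_{z_3=1}\big)\ge\deg V_n$.

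Combining the two bounds yields
\[
\deg V_n\le\deg\Psi_3^n\le\deg V_n+\big(s_{n-1}+s_n+n+1\big).
\]
It then remains to read off the asymptotics. From $\deg\varphi^n=s_{n-1}+s_n+1\sim n^2$ one sees that $s_n$ is of order $n^2$, so the correction term $s_{n-1}+s_n+n+1$ is $O(n^2)$, while $\deg V_n=\sum_{i=0}^{n-2}s_i$ is of order $n^3$; the squeeze then gives $\deg\Psi_3^n\sim n^3$ (and in fact $\deg\Psi_3^n\sim\frac{1}{6}n^3$ once one knows $s_n\sim\frac12 n^2$).

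I expect the only real subtlety to lie in the lower bound, that is, in guaranteeing that the genuine homogeneous degree of the map does not drop below the cubic order. This is exactly where the coprimality of $U_n$ and $V_n$ is indispensable: it is this coprimality, rather than any bound on the affine degrees of the coordinate functions, that forces $V_n\mid F_3|_{z_3=1}$ and hence $D\ge\deg V_n$. Establishing that coprimality rigorously, together with the identity $Q_n=S_{n-1}$ used to size $L$, is the one place where one must look into the actual polynomials $P_i,Q_i,S_i$ produced by the Diller--Favre dynamics rather than arguing with degrees alone.
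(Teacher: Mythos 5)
Your proposal is correct and follows essentially the same route as the paper: exploit the skew-product structure over $\varphi$ to get $U_n/V_n=z_0z_2\prod_{i=1}^{n-1}P_i/Q_i$, use the degree relations $p_i=s_{i-1}+1$, $q_i=s_{i-1}$ together with the coprimality of $U_n$ and $V_n$, and conclude from $s_i\sim i^2$ that the degree grows like $\sum_{i=0}^{n-2}s_i\sim n^3$. Your careful squeeze between the affine data and the homogeneous degree of $\Psi_3^n$ (the bounds $\deg V_n\le\deg\Psi_3^n\le\deg V_n+s_{n-1}+s_n+n+1$) just fills in details the paper leaves implicit, and is a welcome addition rather than a different method.
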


Let us now consider the birational self map of $\mathbb{P}^4_\mathbb{C}$ defined in the affine chart $z_4=1$ by 
\[
\Psi_4(z_0,z_1,z_2,z_3)=\left(z_1+\frac{2}{3},z_0\frac{z_1-\frac{1}{3}}{z_1+1},z_0z_2,z_2z_3\right).
\]

One can check that $\Psi_4=\left(\frac{P_n}{Q_n},\frac{R_n}{S_n},\frac{U_n}{V_n},\frac{W_n}{X_n}\right)$ 
where $W_n=W_1U_1U_2\ldots U_{n-1}$ and $X_n=X_1V_1V_2\ldots V_{n-1}$ have no common factor. Since $\deg U_n\sim n^3$ and $\deg V_n\sim n^3$ one has:

\begin{lem}
{\sl The birational self map $\Psi_4$ of $\mathbb{P}^4_\mathbb{C}$ satisfies
\[
\deg \Psi_4\sim n^4.
\]}
\end{lem}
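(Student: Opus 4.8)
The plan is to exploit the triangular structure of $\Psi_4$, exactly as for $\Psi_3$, and to reduce the computation of $\deg\Psi_4^n$ to the growth of its last coordinate.

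First I would confirm by induction the announced shape of the fourth coordinate. Writing $\Psi_4^n=\Psi_4\circ\Psi_4^{n-1}$ and using that the fourth component of $\Psi_4$ is $z_2z_3$, the fourth coordinate of $\Psi_4^n$ is the product of the third and fourth coordinates of $\Psi_4^{n-1}$. This gives $W_n/X_n=(U_{n-1}/V_{n-1})(W_{n-1}/X_{n-1})$, hence the recursions $W_n=U_{n-1}W_{n-1}$ and $X_n=V_{n-1}X_{n-1}$, and so the product formulas $W_n=W_1U_1U_2\cdots U_{n-1}$ and $X_n=X_1V_1V_2\cdots V_{n-1}$ claimed in the statement.

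Next I would turn these products into telescoping sums of degrees, namely $\deg W_n=\deg W_1+\sum_{i=1}^{n-1}\deg U_i$ and $\deg X_n=\deg X_1+\sum_{i=1}^{n-1}\deg V_i$. From the analysis of $\Psi_3$ one has $\deg U_i\sim i^3$ and $\deg V_i\sim i^3$, so each sum is equivalent to $\sum_{i=1}^{n-1}i^3$, that is to a positive multiple of $n^4$; thus $\deg W_n$ and $\deg X_n$ both grow like $n^4$. The first two coordinates have degree $\sim n^2$ and the third has degree $\sim n^3$, so the fourth coordinate contributes the dominant scale. After homogenizing the four rational components and clearing denominators every term is $O(n^4)$ while the contribution of the fourth coordinate is of exact order $n^4$; hence $\deg\Psi_4^n\sim n^4$.

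The delicate point, and the real content behind the clean formula, is the control of common factors. To read the order of $\deg\Psi_4^n$ off the fourth coordinate I must know that $W_n$ and $X_n$ are coprime, as claimed; this is proved by induction, the inductive step requiring that $U_{n-1}$ and $V_{n-1}$ be coprime and that the freshly introduced factor not cancel against the previously accumulated ones --- a coprimality inherited from the $\varphi$-level and $\Psi_3$-level data. Granting it, no cancellation can lower the top degree, the leading term of order $n^4$ survives, and $\deg\Psi_4^n\sim n^4$ follows.
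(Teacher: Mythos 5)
Your argument is essentially the paper's own: the same recursion $W_n=U_{n-1}W_{n-1}$, $X_n=V_{n-1}X_{n-1}$ giving the product formulas $W_n=W_1U_1\cdots U_{n-1}$, $X_n=X_1V_1\cdots V_{n-1}$, the same telescoping of degrees from $\deg U_i\sim i^3$ and $\deg V_i\sim i^3$, and the same observation that the last coordinate dominates. The coprimality of $W_n$ and $X_n$, which you rightly single out as the delicate point but ultimately only grant, is likewise merely asserted in the paper (``one can check''), so your proposal matches the paper's proof in both substance and level of detail.
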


By repeating this process one gets:

\begin{thm}
{\sl Let $k$ be an integer $\geq 3$. There exist birational maps $\phi$ of 
$\mathbb{P}^k_\mathbb{C}$ such that 
\[
\deg \phi^n\sim n^k.
\]}
\end{thm}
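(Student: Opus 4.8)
The plan is to generalize the construction of $\Psi_3$ and $\Psi_4$ to arbitrary dimension $k$ by defining a birational self map $\Psi_k$ of $\mathbb{P}^k_\mathbb{C}$ whose first two coordinates reproduce the Diller--Favre map $\varphi$ with quadratic growth, and whose remaining coordinates are a ``chain'' of monomial factors that each pick up the degree of the previous coordinate. Concretely I would set, in the affine chart $z_k=1$,
\[
\Psi_k(z_0,z_1,\ldots,z_{k-1})=\left(z_1+\tfrac{2}{3},\,z_0\tfrac{z_1-\frac{1}{3}}{z_1+1},\,z_0z_2,\,z_2z_3,\,\ldots,\,z_{k-2}z_{k-1}\right),
\]
so that the $j$-th coordinate for $j\geq 2$ is $z_{j-2}z_{j-1}$, chaining the growth upward by one power of $n$ at each step.

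First I would record the behavior of the first block: by the stated Diller--Favre lemma the pair $(P_n/Q_n,R_n/S_n)$ satisfies $s_n\sim n^2$ and $\deg\varphi^n=s_{n-1}+s_n+1\sim n^2$, with the recursions $p_n=s_{n-1}+1$, $q_n=s_{n-1}$, $r_n=s_n+1$ already given. Next I would proceed by induction on the coordinate index, exactly as in the $\Psi_3$ and $\Psi_4$ computations: writing the $j$-th coordinate of $\Psi_k^n$ as a reduced fraction, one shows by iteration that its numerator and denominator are products of the numerators (resp.\ denominators) of the $(j-1)$-st coordinate evaluated along the orbit, so that $\deg(\text{$j$-th coordinate of }\Psi_k^n)\sim n^{j}$. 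Thus each new monomial coordinate raises the polynomial growth exponent by one, and the maximum over all coordinates is achieved by the last one, giving $\deg\Psi_k^n\sim n^k$.

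The main obstacle, and the only genuinely nontrivial point, is verifying that at each stage the numerator and denominator of the $j$-th coordinate have \emph{no common factor}, so that the computed degree is the actual (reduced) degree rather than an overestimate. This is the coprimality claim asserted without proof in the $\Psi_3$ and $\Psi_4$ lemmas (``$U_n$ and $V_n$ have no common factor'', ``$W_n$ and $X_n$ have no common factor''). To handle it in general I would argue inductively that the irreducible factors appearing in the numerator of coordinate $j$ come from the $P$-type factors and those in the denominator from the $Q$-type factors, and that the coprimality of $P_n,Q_n$ (and more generally of the numerator/denominator of each earlier coordinate, together with the fact that the variables $z_0,z_2,z_4,\ldots$ introduced by the monomial products are genuinely new) prevents cancellation; the key input is that the base map $\varphi$ is algebraically stable in the sense that no spurious common factors accumulate, which is what underlies $s_n\sim n^2$ in the first place.

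Once coprimality is established, the degree bookkeeping is routine: I would collect the asymptotics $\deg(\text{coordinate }j)\sim n^j$ and conclude $\deg\Psi_k^n=\max_j\deg(\text{coordinate }j)\sim n^k$, which proves the theorem and, combined with the earlier proposition producing all growths $n^\ell$ for $0\leq\ell\leq k-2$, completes the range $0\leq\ell\leq k$ claimed in Theorem~\ref{thm:newdegreebir} (the case $\ell=k-1$ being covered by Lin's monomial maps cited in the introduction).
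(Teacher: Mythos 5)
Your construction is exactly the paper's: the chain $\left(z_1+\tfrac{2}{3},\, z_0\tfrac{z_1-1/3}{z_1+1},\, z_0z_2,\, z_2z_3,\,\ldots,\, z_{k-2}z_{k-1}\right)$ built on the Diller--Favre quadratic map, and your induction (the numerator and denominator of each new coordinate of $\Psi_k^n$ are products along the orbit of those of the previous coordinate, raising the growth exponent by one, with coprimality guaranteeing no cancellation) is precisely the argument the paper runs for $\Psi_3$ and $\Psi_4$ and then iterates. The proposal is correct and takes essentially the same approach; if anything, you are more careful than the paper, since you flag the no-common-factor claim as the point requiring proof rather than asserting it.
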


This statement and Lin's result (\cite{Lin}) imply Theorem \ref{thm:newdegreebir}.

\vspace{8mm}

\bibliographystyle{plain}
\bibliography{biblio}
\nocite{}

\end{document}